\newcommand{\eop}{\bigstar}  
\newcommand{\cf}{{\rm cf}}
\newenvironment{proof}{\noindent{\bf Proof.}}{\par\bigskip}
\newtheorem{THEOREM}{Theorem}[section]
\newtheorem{Conclusion}[THEOREM]{Conclusion}
\newtheorem{Hypothesis}[THEOREM]{Hypothesis}
\newtheorem{LEMMA}[THEOREM]{Lemma}
\newtheorem{Main Theorem}[THEOREM]{Main Theorem}
\newenvironment{main Theorem}{\begin{Main Theorem}} 
{\end{Main Theorem}}
\newtheorem{Theorem}[THEOREM]{Theorem}
\newenvironment{theorem}{\begin{Theorem}}{\end{Theorem}}
\newtheorem{Definition}[THEOREM]{Definition}
\newenvironment{definition}{\begin{Definition}}{\end{Definition}}
\newtheorem{Conventions}[THEOREM]{Conventions}
\newtheorem{Main Definition}[THEOREM]{Main Definition}
\newenvironment{main definition}{\begin{Main Definition}}
{\end{Main Definition}}
\newtheorem{Lemma}[THEOREM]{Lemma}
\newenvironment{lemma}{\begin{Lemma}}{\end{Lemma}}
\newtheorem{Notation}[THEOREM]{Notation}
\newtheorem{Convention}[THEOREM]{Convention}
\newtheorem{Note}[THEOREM]{Note}
\newtheorem{Observation}[THEOREM]{Observation}
\newtheorem{Remark}[THEOREM]{Remark}
\newtheorem{Question}[THEOREM]{Question}
\newtheorem{Main Fact}[THEOREM]{Main Fact}
\newenvironment{main Fact}{\begin{Main Fact}}{\end{Main Fact}}
\newtheorem{Fact}[THEOREM]{Fact}
\newtheorem{Subfact}[THEOREM]{Subfact}
\newtheorem{Claim}[THEOREM]{Claim}
\newtheorem{Main Claim}[THEOREM]{Main Claim}
\newenvironment{main claim}{\begin{Main Claim}}{\end{Main Claim}}
\newtheorem{Crucial Claim}[THEOREM]{Crucial Claim}
\newenvironment{crucial claim}{\begin{Crucial Claim}}{\end{Crucial Claim}}
\newtheorem{Subclaim}[THEOREM]{Subclaim}
\newtheorem{Sublemma}[THEOREM]{Sublemma}
\newtheorem{Corollary}[THEOREM]{Corollary}
\newtheorem{Example}[THEOREM]{Example}
\newtheorem{Problem}[THEOREM]{Problem}
\newtheorem{Proposition}[THEOREM]{Proposition}
\newtheorem{Conjecture}[THEOREM]{Conjecture}
\newtheorem{Discussion}[THEOREM]{Discussion}
\newenvironment{Proof of the Subfact}
{\noindent{\bf Proof of the Subfact.}}{\par\bigskip}
\newenvironment{Proof of the Theorem}
{\noindent{\bf Proof of the Theorem.}}{\par\bigskip}
\newenvironment{Proof of the Proposition}
{\noindent{\bf Proof of the Proposition.}}{\par\bigskip}
\newenvironment{Proof of the Conclusion}
{\noindent{\bf Proof of the Conclusion.}}{\par\bigskip}
\newenvironment{Proof of the Observation}
{\noindent{\bf Proof of the Observation.}}{\par\bigskip}
\newenvironment{Proof of the Fact}
{\noindent{\bf Proof of the Fact.}}{\par\bigskip}
\newenvironment{proof of the lemma}
{\noindent{\bf Proof of the Lemma.}}{\par\bigskip}
\newenvironment{Proof of the Claim}
{\noindent{\bf Proof of the Claim.}}{\par\bigskip}
\newenvironment{Proof of the Corollary}
{\noindent{\bf Proof of the Corollary.}}{\par\bigskip}
\newenvironment{Proof of the Subclaim}
{\noindent{\bf Proof of the Subclaim.}}{\par\medskip}
\newenvironment{Proof of the Main Claim}
{\noindent{\bf Proof of the Main Claim.}}{\par\bigskip}
\newenvironment{Proof of the Crucial Claim}
{\noindent{\bf Proof of the Crucial Claim.}}{\par\bigskip}
\newcommand{\rng}{{\rm rng}}
\newcommand{\rest}{\upharpoonright}  
\newcommand{\DD}{{\cal D}}
\newcommand{\FF}{{\cal F}}
\newcommand{\LL}{{\cal L}}
\newcommand{\V}{{\bf V}}
\newcommand{\alg}{\mathfrak A}
\def\mathunderaccent#1#2 {\let\theaccent#1\skewfactor#2
\mathpalette\putaccentunder}
\def\putaccentunder#1#2{\oalign{$#1#2$\crcr\hidewidth
\vbox to.2ex{\hbox{$#1\skew\skewfactor\theaccent{}$}\vss}\hidewidth}}
\def\name{\mathunderaccent\tilde-3 }
\author{Mirna D\v zamonja\\ School of Mathematics, University of East Anglia\\Norwich, NR4 7TJ, UK
\\\scriptsize{h020@uea.ac.uk} }
\title{Some Banach spaces added by a Cohen real\\
(to appear in {\em Topology and its Applications})}
\begin{document}
\maketitle
\begin{abstract} We study certain Banach spaces that are added in the extension by one Cohen real.
Specifically, we show that adding just one Cohen real to any model 
adds a Banach space of density $\aleph_1$ which does not embed into any such space in the ground model (Theorem \ref{Cohenreal}). Moreover, such a Banach space can be chosen to be UG
(Theorem \ref{Cohenrealc}). This has consequences on the 
the isomorphic universality number for Banach spaces of density $\aleph_1$, which is hence equal to
$\aleph_2$ in the standard Cohen model and the same is true for UG spaces. Analogous universality results for Banach spaces
are true for other cardinals, by a different proof (Theorem \ref{isomorphisms}(1)).\footnote{The author thanks EPSRC for the grant EP/I00498  and Leverhulme Trust for a Research Fellowship in May 2014-June 2015,  which both supported this research. I sincerely thank IHPST at the Universit\'e Paris 1- Sorbonne for offering me hospitality as an Invited Researcher in the period June 2013-June 2015.
Many thanks to Charles Morgan for sharing his knowledge on morasses and to him and Saharon Shelah for commenting on earlier drafts of this paper.

MSC 2010 Classification: 03E75, 46B26, 46B03, 03C45, 06E15.}
\end{abstract}
\section{Introduction} It is a mixture of sadness and pleasure to be writing an article remembering the great mathematician and friend, Mary Ellen Rudin, whom I have had the pleasure to know for many years. She is missed in many ways but the inspiration she left remains. Writing this paper I chose results that I thought I could share with her, as they fit her taste of using combinatorial set theory in a context that involves topology and analysis. I also hope that by exposing a completely combinatorial way of using the forcing extension by one Cohen real in the context of Banach spaces of weight 
$\aleph_1$, this work will attract the attention of Banach space theorists looking for consistent examples of non-separable Banach spaces constructed without the necessity to use involved set-theoretic techniques.

\section{The extension by one Cohen real and an unembeddable UG space}\label{Cohenlike}
In this section we show two constructions of interesting Banach spaces that are added in the extension by one
Cohen real. 
The results are about Banach spaces but they build on the spirit of a number of earlier results in set theory, namely that adding just one Cohen real causes the universe to contain non-trivial objects of
size $\aleph_1$. The first such result is due to Roitman who showed in \cite{Roitman} that
${\rm MA}(\aleph_1)$ is false in this extension. This result was improved by Shelah in \cite{ShSolovayinaccessible} who proved that there is no Suslin tree in this model, the alternative proofs of which then were given by Todor{\v c}evi\'c \cite{Todorcevicpairs} and Mark Bickford, as given in 
Velleman \cite{Velleman}.

The general technique is to use a neat simplified $(\omega,1)$-morass (constructed in ZFC by Velleman \footnote{who was a student of Mary Ellen Rudin and Ken Kunen}, as explained below) in order to inductively construct a Boolean algebra $\alg$ of size $\aleph_1$ and with properties required by the construction. The point of the morass is that it 
lets us construct our algebra, an object of size $\aleph_1$, by controlling the {\em finite} pieces of
the algebra.
Once $\alg$ is constructed we obtain our space as $C({\rm St}(\alg))$. The reason to use the extension by 
one Cohen real is that that particular forcing interacts very nicely with the morass to let us have a lot freedom in choosing the relations between the generators of $\alg$, which at the end translate into the properties of the Banach space. The idea to extend Cohen-like conditions in incomparable ways are present in the proof by Brech and Koszmider  (\cite{BrKo}, Theorem 3.2) that $l^\infty/c_0$ is not a universal Banach space
for density $\mathfrak c$ in the extension by $\aleph_2$ Cohen reals, and the earlier proof by 
D\v zamonja and Shelah 
(\cite{DjSh614}, Theorem 3.4) that in the model considered there
there is no universal normed vector space over ${\mathbb Q}$ of size 
$\aleph_1$ under isomorphic vector space embeddings. In our case we use further properties of the
Cohen forcing to control objects of size $\aleph_1$
in the extension by controlling the ones that are already in the ground model, see Lemma \ref{guessingCohen}.
 
\begin{theorem}\label{Cohenreal} 
Forcing with one Cohen real adds a Boolean algebra $\alg$ of size $\aleph_1$ such that
in the extension the Banach space $C({\rm St}(\alg))$ does not isomorphically embed into any space $X_\ast$ which is in
the ground model and has density $\aleph_1$.
\end{theorem}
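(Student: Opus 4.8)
\medskip
\noindent\textbf{Proof proposal.}

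The plan is to produce $\alg$ as a Boolean algebra added by the Cohen forcing $\mathbb{C}=\Fn(\omega,2)$ and constructed, in the ground model, along a neat simplified $(\omega,1)$-morass $\mathcal{M}$ of the kind built in ZFC by Velleman \cite{Velleman}. The intended generators $\langle b_\alpha:\alpha<\omega_1\rangle$ are indexed by $\omega_1$; the morass organises the construction into \emph{finite} stages, attaching to each node $s$ a finite Boolean algebra $\alg_s$ on finitely many generators, so that the morass maps induce coherent embeddings $\alg_s\hookrightarrow\alg_t$ and $\alg:=\bigcup_s\alg_s$ is forced to have size $\aleph_1$. The Cohen generic $c$ enters only through the amalgamation steps: when the morass splits and two finite pieces must be merged, $c$ is consulted on coordinates reserved for that merge to decide the new relations between the generators from the two sides — for the relevant pair(s) $(\alpha,\beta)$, whether $b_\alpha\cdot b_\beta$ is forced to $0$, equals $b_\alpha$, equals $b_\beta$, or splits off a fresh atom. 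Two features are all that will matter downstream: (a) the recipe makes sense for \emph{every} $c$ and yields a Boolean algebra on exactly $\aleph_1$ generators; and (b) along $\mathcal{M}$ the amalgamation choices are as Cohen-generic as coherence permits, so that, given any condition $p$, any ``fresh'' morass-related tuple $\alpha_1<\dots<\alpha_n$ of generators, and any finite Boolean algebra $A$ on $n$ generators compatible with the morass data, some $q\le p$ forces $\langle b_{\alpha_1},\dots,b_{\alpha_n}\rangle_{\alg}\cong A$.

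Next recall that for $K={\rm St}(\alg)$ the space $C(K)$ is the closed linear span of the clopen indicators $u_a:=\chi_a$ $(a\in\alg)$, with $\|u_a\|=1$ for $a\ne0$, and that for generators the sup-norm of a finite rational combination is the combinatorial quantity
\[
\Bigl\|\sum_i q_i u_{\alpha_i}\Bigr\|_\infty=\max_\tau\Bigl|\sum_{i:\, b_{\alpha_i}\ge\tau}q_i\Bigr|,
\]
$\tau$ ranging over the atoms of $\langle b_{\alpha_1},\dots,b_{\alpha_n}\rangle$; this value is already fixed by any finite approximation mentioning $b_{\alpha_1},\dots,b_{\alpha_n}$, since later extensions only subdivide atoms. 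Thus the ``norm profile'' of a morass-related tuple of generators inside $C({\rm St}(\alg))$ is the kind of finite datum that (b) lets us prescribe freely below any condition, and the design goal is to make these profiles, collected along $\omega_1$, so unpredictable that no density-$\aleph_1$ Banach space from the ground model can reproduce them — informally, a Cohen-generic $\omega_1$-sequence of finite amalgamations resists coordinatewise approximation by any ground-model object.

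For the non-embedding, suppose toward a contradiction that $X_\ast$ is a ground-model space of density $\aleph_1$ with a ground-model dense set $\{x_\beta:\beta<\omega_1\}$, and that $p_0\in\mathbb{C}$ forces that $T$ embeds $C({\rm St}(\alg))$ into $X_\ast$ with $\|f\|\le\|Tf\|\le C\|f\|$ for a fixed $C$ (shrink $p_0$ to pin $C$ down). For $\alpha<\omega_1$, $k<\omega$ let $\beta_{\alpha,k}$ name the least $\beta$ with $\|Tu_\alpha-x_\beta\|<2^{-k}$; since $\mathbb{C}$ is countable there is, in the ground model, a countable $S_{\alpha,k}\subseteq\omega_1$ with $\Vdash\beta_{\alpha,k}\in S_{\alpha,k}$ — this is the ground-model guessing of $T$ furnished by Lemma~\ref{guessingCohen} — and $\alpha\mapsto S_\alpha:=\bigcup_k S_{\alpha,k}$ is then a ground-model function into $[\omega_1]^{\le\omega}$. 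Pressing down on $\alpha\mapsto\sup(S_\alpha\cap\alpha)$ and thinning, obtain in the ground model a stationary $W\subseteq\omega_1$ on which $\langle S_\alpha:\alpha\in W\rangle$ is a quasi-$\Delta$-system (a common ``tail'' root, pairwise disjoint and increasing ``new parts''), and thin $W$ further so that its members realise under the morass maps every finite configuration needed below.

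It remains to spring the trap, i.e.\ to show that $\{q:q\Vdash T\text{ is not a }C\text{-embedding}\}$ is dense below $p_0$. Given $p\le p_0$, use (b) and the height of $\mathcal{M}$ to choose a fresh morass-related tuple $\alpha_1<\dots<\alpha_n$ from $W$ (with $n$ depending only on $C$) whose amalgamation coordinates avoid $\Dom(p)$; extend $p$ to decide each $\beta_{\alpha_i,k}$ (for suitable $k$) as some $\gamma_i\in S_{\alpha_i}$, so that the reals $\|\sum_i\varepsilon_i x_{\gamma_i}\|$ $(\varepsilon_i=\pm1)$ are now fixed ground-model numbers; then extend once more to a $q\le p$ forcing $\langle b_{\alpha_1},\dots,b_{\alpha_n}\rangle_{\alg}\cong A$ for a configuration $A$ selected so that its combinatorial profile cannot fit inside the $C$-windows $\bigl[\|\sum_i\varepsilon_ix_{\gamma_i}\|/C,\ C\|\sum_i\varepsilon_ix_{\gamma_i}\|\bigr]$ — since $q$ also forces $Tu_{\alpha_i}$ within $2^{-k}$ of $x_{\gamma_i}$, it would then force a false numerical inequality about fixed ground-model reals. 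Making this last step actually work across all $p\le p_0$ — choosing the menu of Cohen-decided amalgamations in paragraph one, and the target configurations $A$, so that the thinning of $W$ really does keep the relevant windows under uniform enough control for the $\aleph_1$-many constraints to become jointly unsatisfiable — is the heart of the matter and the principal obstacle; the morass is indispensable precisely because it is what lets a single Cohen real inject $\aleph_1$-many essentially independent binary choices into $\alg$ while every finite fragment stays under control, and what lets the pressing-down in paragraph three land the chosen tuple in a morass-related position. Once this engine is built, the reflection through Lemma~\ref{guessingCohen} and the pressing-down are routine, and the UG strengthening (Theorem~\ref{Cohenrealc}) should follow by refining the choice of the configurations $A$.
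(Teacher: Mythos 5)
Your scaffolding matches the paper's: a neat simplified $(\omega,1)$-morass organises the algebra into finite stages, the Cohen real decides the relations among reserved blocks of generators, and Lemma~\ref{guessingCohen} pulls the putative embedding back into the ground model. But the proof has a genuine gap exactly where you flag ``the heart of the matter and the principal obstacle'': you never exhibit the finite configurations $A$ whose norm profiles are incompatible with the ground-model data, and without them there is no contradiction. The paper's engine is a single, completely explicit dichotomy. Reserve for each $n$ a block $A_n$ of $n+1$ generators at level $\theta_{\alpha_{n+1}}$ lying outside $\bigcup_{f\in\FF_{\beta,\alpha_{n+1}}}\rng(f)$ for all $\beta\le\alpha_n$ (Lemma~\ref{increasing}(3) supplies these), and let the single bit $r(n)$ of the Cohen real decide whether the block is pairwise disjoint or a $\le^\ast$-chain. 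For $m=n_*^2+1$ generators the sup-norm of $\sum_k\chi_{[i_k]}$ is $1$ in the disjoint case and $m$ in the chain case; under an isomorphic embedding with constant $n_*$ the image sum must then have norm in $(1/n_*,\,n_*)$ or in $(m/n_*,\,n_*m)$ respectively, and since $m/n_*=(n_*^2+1)/n_*>n_*$ these two windows are disjoint. Meanwhile the approximating ground-model vectors $z_{j(i_0)},\dots,z_{j(i_m)}$ (obtained from the guessing lemma applied to the function $i\mapsto j(i)$, not to countable sets of candidates) have a sum whose norm is a fixed ground-model real, falling on one side of $n_*-1$; the set of Cohen conditions setting $r(n)$ to the \emph{wrong} value for that side is dense and lies in the ground model, so genericity finishes the proof. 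This is the step you would have to supply; everything else in your write-up is preparation for it.

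Two further remarks. First, your pressing-down/quasi-$\Delta$-system machinery on $\alpha\mapsto S_\alpha$ is unnecessary and never actually used in your final argument: the paper's version of the reflection is cleaner --- apply Lemma~\ref{guessingCohen} once to the single function $j:\omega_1\to\omega_1$ choosing approximators, obtaining a ground-model $j_0=j\rest A$ on an uncountable ground-model $A$, after which the whole density argument takes place in the ground model with no stationarity needed. Second, your ``decide the approximators below $p$, then use a still-free coordinate'' scheme is essentially the argument the paper uses later for the iterated extension (Lemma~\ref{isometries}); for a single Cohen real it is an unnecessary detour, and in any case it still requires naming the two incompatible configurations above, which is the mathematical content of the theorem.
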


\begin{proof}  Let us recall
that a {\em neat simplified $(\omega,1)$-morass} is a system $\langle \theta_\alpha:\,\alpha\le\omega\rangle, \langle \FF_{\alpha,\beta}:\,\alpha<\beta\le\omega\rangle$ such that
\begin{enumerate}
\item for $\alpha<\omega$, $\theta_\alpha$ is a finite number $>0$, and $\theta_{\omega}=\omega_1$,
\item for $\alpha<\beta\le\omega$, $\FF_{\alpha,\beta}$ is a
set of order preserving functions from $\theta_\alpha$ to $\theta_\beta$,
%
\item 
$\bigcup_{\alpha<\omega}\bigcup_{f\in \FF_{\alpha,\omega}} f^{``}\theta_\alpha=\omega_1$,
\item for all $\alpha<\beta<\gamma\le\omega$ 
we have that $\FF_{\alpha,\gamma}=\{f\circ g:\,g\in \FF_{\alpha,\beta}
\mbox{ and }f\in \FF_{\beta,\gamma}\}$,
\item for all $\alpha<\omega$ we have that $\FF_{\alpha,\alpha+1}=\{{\rm id}_\alpha, h_\alpha\}$ for some $h_\alpha$ such that 
there is a {\em splitting point} $k$ with $h_\alpha\rest k={\rm id}_\alpha\rest k$ and
$h_\alpha(k)> \theta_\alpha$, where
${\rm id}_\alpha$ is the identity function on $\theta_\alpha$,
\item for every $\beta_0,\beta_1<\omega$ and $f_l\in \FF_{\beta_l,\omega}$ for $l<2$ there is $\gamma<\omega$
with $\beta_0,\beta_1<\gamma$, function $g\in \FF_{\gamma,\omega}$ and $f'_l\in \FF_{\beta_l,\gamma}$  such that $f_l
=g\circ f'_l$ for $l<2$.
\end{enumerate}

(Expanded) simplified $(\kappa, 1)$-morasses and their improvement to neat simplified $(\kappa, 1)$-morasses, as defined here for $\kappa=
\omega$, were introduced by Velleman in \cite{Vellemanmorass}, Definition 2.10 and just after Theorem 3.8. respectively. He showed in \cite{VellemanMA} that a neat simplified $(\omega,1)$-morass exist in ZFC. We shall use the terminology {\em neat morass} to refer to 
neat simplified $(\omega, 1)$-morasses.

For future use in the proof we need the following observation which easily follows from the properties of a neat morass:

\begin{lemma}\label{increasing} Suppose that $\langle \theta_\alpha:\,\alpha\le\omega\rangle, \langle \FF_{\alpha,\beta}:\,\alpha<\beta\le\omega\rangle$ is a neat morass. Then:

{\noindent (1)} Suppose that $\alpha\le\beta\le\gamma <\omega$. Then
$\bigcup_{f\in \FF_{\alpha,\gamma}}\rng(f)\subseteq \bigcup_{f\in \FF_{\beta,\gamma}}\rng(f)$.

{\noindent (2)} The sequence $\langle \theta_\alpha\rangle_{\alpha<\omega}$ is strictly increasing and $\theta_\alpha\notin \bigcup_{f\in \FF_{\alpha,\alpha+1}}\rng(f)$.

{\noindent (3)} For any $\alpha, \gamma<\omega$ and $1\le n<\omega$, if $\gamma-\alpha\ge n$ then
$|\theta_\gamma\setminus \bigcup_{f\in \FF_{\alpha,\gamma}}\rng(f)|\ge n$.
\end{lemma}

\begin{proof of the lemma} (1) This follows from item 4. in the definition of a
neat morass, since $\bigcup_{f\in \FF_{\alpha,\gamma}}\rng(f)=
\bigcup_{g\in \FF_{\alpha,\beta}, f\in \FF_{\beta,\gamma}}\rng(f\circ g) \subseteq 
\bigcup_{f\in \FF_{\beta,\gamma}}\rng(f)$.

{\noindent (2)}  These observations follow easily from item 5. in the definition of a
neat morass.

{\noindent (3)} The proof is by induction on $n\ge 1$. The initial stage $\underline{n=1}$ is taken care of
(2), as $\theta_\alpha\notin \bigcup_{f\in \FF_{\alpha,\alpha+1}}\rng(f)$, which suffices by (1). For the induction step
$\underline{n+1}$, let $\beta=\gamma-1$. As before, $\theta_\beta\notin \bigcup_{f\in \FF_{\beta,\gamma}}\rng(f)$, so by (2), $\theta_\beta\notin \bigcup_{f\in \FF_{\alpha,\gamma}}\rng(f)$.

Let $\{s_0, \ldots, s_{n-1}\}$ be a 1-1 enumeration of a subset of $\theta_\beta$ disjoint from
$\bigcup_{f\in \FF_{\alpha,\beta}}\rng(f)$, which exists by the induction hypothesis. We shall show
that for every $s\in \{s_0, \ldots, s_{n-1}\}$ we have that $s\notin \bigcup_{f\in \FF_{\alpha,\gamma}}\rng(f)$, which together with the point $\theta_\beta$ will give a set of $n+1$ elements of 
$\theta_\gamma\setminus \bigcup_{f\in \FF_{\alpha,\gamma}}\rng(f)$. So fix such $s$.

Suppose that $s\in \rng(h)$ for some $h\in  \FF_{\alpha,\gamma}$ and let $g\in \FF_{\alpha,\beta}$
and $g\in \FF_{\beta,\gamma}$ be such that $h=f\circ g$. Let $t\in \rng(g)$ be such that $s=f(t)$.
If $f={\rm id}_\beta$ then $s=t$, so $s\in \rng(g)$, contrary to the choice of the set 
$\{s_0, \ldots, s_{n-1}\}$. If $f=h_\beta$, let $k$ be the splitting point of $h$. If $t<k$ then
again $s=t$, a contradiction as before. If $t\ge k$ then $s=h_\beta(t)>\theta_\beta$, a contradiction.
$\eop_{\ref{increasing}}$
\end{proof of the lemma}

For the rest of the proof, let $\V$ denote the ground model, so any model of ZFC,
and let $\V[G]$ denote the extension of $\V$  by one Cohen real. Unless stated otherwise, 
our arguments take place in $\V$. Fixing a neat morass as in the above definition,
we shall define a Boolean algebra $\alg$ as follows. Let $\LL=\{\le^\ast, d\}$ be a language in which
both $\le^\ast$ and $d$ are binary relation symbols and let $\varphi$ be the $\LL$-sentence
stating that $\le^\ast$ is a partial order, $d$ is symmetric and antireflexive and that for all $x,y$, the statements $d(x,y)$ and $\le^\ast (x,y)$
are contradictory. Our aim is to obtain a model $I$ of $\varphi$ on $\omega_1$, and then interpreting 
$\le^\ast$ as the Boolean $\le$ and $d$ as the disjointness relation in the language of Boolean algebras, generate $\alg$ by $\omega_1$ freely except for the relations in $I$. This is possible by the
compactness theorem and the fact that we have defined $\varphi$ so that it is consistent with the axioms of a Boolean algebra.

To achieve this we first by induction on $\alpha < \omega$ define a model $I_\alpha$ of $\varphi$ on 
$\theta_\alpha$. The basic requirement of the induction will be:

\begin{description}
\item[(i)] if $\alpha< \beta$ and $f\in \FF_{\alpha, \beta}$ then $f$ gives rise to an
$\LL$-embedding from $I_\alpha$to $I_\beta$.
\end{description}

Then we shall define $\le^\ast$ on $\omega_1$ by letting $i \le^\ast j$ iff there are some 
$\alpha< \omega$, $f\in \FF_{\alpha, \omega}$ and $i', j'\in  \theta_\alpha$ such that 
$i'\le^\ast j$ holds in $\theta_\alpha$ and $f(i')=i$, $f(j')=j$. We similarly define $d$. Requirement 6. in the definition of a neat morass and part (i) of the inductive hypotheses give that this constructions defines a well defined model of $\varphi$.

For the main part of the proof,
suppose that in the ground model $\V$ we have a Banach space $X_\ast$  with density $\aleph_1$ and a fixed dense set $\{z_i:\,i<\omega_1\}$ of $X_\ast$. 
We shall guarantee that for all natural numbers $n_*\ge 3$ and for all $j:\,\omega_1\to\omega_1$ there are $i_0, i_1 <\ldots < i_{n_*^2}$ such that:
\begin{equation*}\label{dichotomy}
\begin{split}
\mbox{ if }||z_{j(i_0)}+z_{j(i_1)}+\ldots z_{j(i_{n_*^2})}||<n_*-1&\mbox{, then } i_0, i_1,\ldots i_{n_*^2}\mbox{ are disjoint in }
\alg\mbox{ and }\\
 \mbox{ if }||z_{j(i_0)}+z_{j(i_1)}+\ldots z_{j(i_{n_*^2})}||\ge n_*-1 & \mbox{, then } i_0<_\alg  i_1\ldots <_\alg i_{n_*^2}. 
\end{split}\tag{$\ast$}
\end{equation*}
To see that our algebra, once constructed,
has the required properties, suppose that $T$ is an isomorphic embedding of $C({\rm St}(\alg))$ into some $X_\ast$ as above
and that $(\ast)$ holds for a dense set $\{z_i:\,i<\omega_1\}$ of $X_\ast$. Let
$x_i=T(\chi_{[i]})$ for $i<\omega_1$ and let $n_*\ge 3$ be large enough so that for each $x\in C({\rm St}(\alg))$
we have that 
\[
\dfrac{1}{n_*}||x||< ||T(x)||<n_*||x||,
\]
as is guaranteed to exist by the definition of an isomorphic embedding. For each $i<\omega_1$ let us choose $j(i)$ such that
$||x_i-z_{j(i)}||<\dfrac{1}{n_*^2+1}$. Let $i_0, i_1 <\ldots < i_{n_*^2}$ be as guaranteed by $(\ast)$. Then in the case $||z_{j(i_0)}+z_{j(i_1)}+\ldots z_{j(i_{n_*^2})}||<n_*-1$
we have that 
\[
||x_{i_0}+x_{i_1}+\ldots x_{i_{n_*^2}}||\le ||z_{j(i_0)}+z_{j(i_1)}+\ldots z_{j(i_{n_*^2})}||+\Sigma_{k\le n_*^2} ||x_{i_k}
-z_{j(i_k)}||< n_*-1+ \frac{n^2_*+1}{n^2_*+1}=n_*,
\]
yet $\dfrac{1}{n_*}||\chi_{[i_0]}+\ldots \chi_{[i_{n^2_\ast}]}||=\dfrac{n^2_\ast+1}{n_*}>n_\ast$, in contradiction with the choice
of $n_\ast$. The other case is similar.

Now we claim that to guarantee the condition $(\ast)$ for any fixed $X_\ast$ and $\{z_i:\,i<\omega_1\}$,
it suffices to assure that for all
\[
n_*\ge 3,
A\in [\omega_1]^{\omega_1}\cap
\V , j:\,A\to\omega_1\in \V\mbox{  there are }i_0, i_1 <\ldots < i_{n_*^2}\in A \mbox{ exemplifying }(\ast).
\tag{$\ast\ast$}
\]

For this we shall use the following well known Lemma \ref{guessingCohen}, which is the combinatorial heart of most arguments about the forcing with one Cohen real.  

\begin{lemma}\label{guessingCohen} For every $j:\,\omega_1\to \omega_1$ in $\V[G]$, there is $A\in [\omega_1]^{\omega_1}$ in 
$\V$ and $j_0:\,A\to  omega_1$ in $\V$ such that $j\rest A=j_0$. 
\end{lemma}

\begin{proof of the lemma}
Suppose that $\name{j}$ is a name in the Cohen forcing for a function 
$j:\,\omega_1\to \omega_1$. Then for every $\alpha<\omega_1$ there is $p\in G$ deciding the value 
of $\name{j}(\alpha)$. Since the
forcing notion is countable, there is $p^\ast$ in $G$ such that the set $A$ of all $\alpha$ for which $p$ decides the
value of $\alpha$ is uncountable, and then it suffices to define $j_0$ on $A$ by $j_0(\alpha)=i$ iff $p$ forces
$\name{j}(\alpha)=i$.
$\eop_{\ref{guessingCohen}}$
\end{proof of the lemma}

In order to guarantee the requirement $(\ast\ast)$, we first define by induction on $n$ an increasing sequence of natural numbers
$\alpha_n$ such that
$\alpha_0=0$ and $\alpha_{n+1} \ge \alpha_n +(n+1)$. By Lemma \ref{increasing}(3), for each $n<\omega$ we can find a set $A_n\subseteq \theta_{\alpha_{n+1}}$ of size $n+1$
such that for any $\beta\le \alpha_n$ and $f\in \FF_{\beta,\alpha_{n+1}}$, 
the intersection $A_n\cap {\rm rng}(f)$ is empty.

Our second requirement of the induction will be as follows, where $r$ is
the generic Cohen real, viewed as a function from $\omega$ to 2 :
\begin{description}
\item[(ii)] Let $n<\omega$, suppose that $I_{\alpha_n}$ has been defined and
let $\{i_0, i_1, \ldots ,i_{n}\}$ be the increasing enumeration of $A_n$.
Then if $r(n)=0$ we have that $i_0, i_1,\ldots i_{n}$  are pairwise in the relation $d$  and
if  $r(n)=1$ then $i_0 \le^\ast i_1 \le^\ast \ldots 
\le^\ast i_{n}$.
For $\beta\in (\alpha_n,\alpha_{n+1})$
we let $I_\beta$ be the restriction of $I_{\alpha_{n+1}}$ to $\theta_\beta$.
\end{description}

By the choice of the set $A_n$ it follows that that conditions {\bf (i)} and {\bf (ii)} can be met in a simple inductive construction, as there will be no possible contradiction between {\bf (i)} and the requirements that {\bf (ii)} puts on the elements of
$A_n$.  Let us now show that the resulting algebra $\alg$ is as required. 
Hence let us fix $n_\ast, X_\ast, \{z_i:\,i<\omega_1\}, A$ and $j=j_0$ as in $(\ast\ast)$. Is it is then sufficient to observe that
the following set is dense in the Cohen forcing (note that the set is in the ground model):
\[
\left\{p:\,(\exists n\in {\rm dom}(p))(n \ge  n_*^2\mbox{ and } p(n)=0\iff ||z_{j(i_0)}+z_{j(i_1)}+\ldots z_{j(i_{n_*^2})}||<n_*-1\right\},
\]
where $\{i_0, i_1, \ldots ,i_{n^2_*}\}$ is the increasing enumeration of the first $n_*^2+1$ elements of $A_n$.
$\eop_{\ref{Cohenreal}}$
\end{proof}

The following Theorem \ref{Cohenrealc} shows that the construction from Theorem \ref{Cohenreal} can be refined so that the resulting
Banach space is UG. For completeness we recall the definition of such a space, but we do not define
the G\^ateaux differentiability as we do not need it in the rest of the paper.

\begin{definition} A Banach space $X$ is called {\em UG}
if it admits a uniformly G\^ateaux differentiable renorming.
\end{definition}

A Banach space $C(K)$ of continuous real functions on a compact space $K$ is UG iff $K$ is
a uniform Eberlein compact, as shown by Fabian, Godefroy and Zizler in \cite{FaGoZi}. Our approach to constructing the desired space
$X=C(K)$ will follow the schema described in the introduction to this section and used in the proof of Theorem \ref{Cohenreal}. We shall not recall the definition of a uniformly Eberlein compact space, but only the following notion of a $c$-algebra.

\begin{definition}\label{embeddingBell}
A subset $C$ of a
Boolean algebra $\alg$ has {\em the nice property}
if for no finite $F\subseteq C$ do we have $\bigvee F=1$.
A Boolean algebra $\alg$ is a c-{\em algebra} iff there is
a family $\{ B_n:\,n<\omega\}$ of pairwise disjoint antichains of $\alg$
whose union
has the nice property and generates $\alg$. 
\end{definition}

Bell showed in \cite{bell} that the Stone space of a c-algebra $\alg$ is a uniform Eberlein compact
which therefore implies that for such an algebra the Banach space $C({\rm St}(\alg))$ is an UG  space. We prove:

\begin{theorem}\label{Cohenrealc} 
Forcing with one Cohen real adds a c-algebra $\alg$ of size $\aleph_1$ such that
in the extension the UG Banach space $C({\rm St}(\alg))$ does not isomorphically embed into any
space $X_\ast$ which is in
the ground model and has density $\aleph_1$.
\end{theorem}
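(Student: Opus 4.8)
The plan is to re-run the construction of Theorem \ref{Cohenreal} but to build a $c$-algebra rather than an arbitrary Boolean algebra, so that Bell's theorem delivers the UG conclusion. First I would set things up so that the freely generated algebra comes equipped with a countable partition of its generators into antichains with the nice property. Concretely, instead of generating $\alg$ by $\omega_1$ generators subject only to the relations in a model $I$ of $\varphi$, I would fix in advance a partition $\omega_1=\bigsqcup_{n<\omega}S_n$ into uncountable pieces, declare that the generators indexed by $S_n$ are to form the antichain $B_n$, and enrich the language $\LL$ and the sentence $\varphi$ so that within each $S_n$ the relation $d$ holds between any two distinct elements (so each $B_n$ is an antichain), while also asserting the nice property: no finite subset of $\bigcup_n B_n$ has join $1$. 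The nice property is a family of universal statements about finite subsets, hence preserved under the compactness/free-generation step exactly as $\varphi$ was; one must only check that this enriched theory is still consistent with the Boolean algebra axioms, which it is, since one can realize it in, say, the algebra of clopen subsets of $2^\omega$ with the generators sent to cylinders that stay bounded away from the whole space.

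The delicate point is that the combinatorial requirement $(\ast)$ from the proof of Theorem \ref{Cohenreal} must be arranged \emph{within} this $c$-algebra structure. In that proof one needs, for the dichotomy, to be able to force (via the Cohen real coordinate $r(n)$) either that the elements $i_0,\dots,i_n$ of the morass-chosen set $A_n$ are pairwise $d$-related, or that they form a $\le^\ast$-chain. If all the generators in a single $B_n$ are already forced to be pairwise disjoint, one cannot also ask them to sometimes form a nontrivial $\le^\ast$-chain. The fix is to make $A_n$ pick its elements from \emph{distinct} partition classes $S_m$: by Lemma \ref{increasing}(3) we have as much room in $\theta_{\alpha_{n+1}}\setminus\bigcup_{f\in\FF_{\beta,\alpha_{n+1}}}\rng(f)$ as we like, so at stage $n$ I would choose $A_n$ of size $n+1$ with its $k$-th element lying in $S_{g(n,k)}$ for a suitable injection-like bookkeeping function $g$, ensuring no two elements of any single $A_n$ share a class. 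Then requirement (ii) — switching between the all-$d$ and the $\le^\ast$-chain configuration according to $r(n)$ — involves only cross-class relations, which are completely unconstrained by the $c$-algebra axioms, so (i) and (ii) remain jointly satisfiable by the same simple inductive construction, and the density argument at the end of the proof of Theorem \ref{Cohenreal} goes through verbatim to secure $(\ast\ast)$ and hence $(\ast)$.

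With $\alg$ so constructed, it is a $c$-algebra by design: $\{B_n:n<\omega\}$ is a family of pairwise disjoint antichains whose union generates $\alg$ and has the nice property. Hence by Bell's result (\cite{bell}) ${\rm St}(\alg)$ is a uniform Eberlein compact, and by Fabian--Godefroy--Zizler (\cite{FaGoZi}) $C({\rm St}(\alg))$ is UG. The non-embeddability into ground-model spaces of density $\aleph_1$ is exactly the conclusion of Theorem \ref{Cohenreal}, whose proof used only property $(\ast)$, which we have preserved. I expect the main obstacle to be bookkeeping: verifying that the requirement ``each $B_n$ is an antichain and $\bigcup_n B_n$ has the nice property'' is simultaneously (a) expressible as part of a first-order theory extending $\varphi$ that is still consistent with the Boolean algebra axioms, and (b) genuinely non-interfering with the per-stage Cohen-driven relations imposed in (ii) — which is precisely what the distinct-classes choice of $A_n$ is there to guarantee.
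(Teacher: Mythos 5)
Your overall strategy is the paper's: enlarge the language by antichain predicates $B_n$, keep the Cohen-driven dichotomy of Theorem \ref{Cohenreal} by making the elements of each $A_n$ lie in pairwise distinct classes (so the $\le^\ast$-chain option never collides with within-class disjointness), and then quote Bell and Fabian--Godefroy--Zizler. But one of your two departures from the paper is a genuine problem. You propose to \emph{fix in advance} a partition $\omega_1=\bigsqcup_{n<\omega}S_n$ and let membership in $B_n$ be membership in $S_n$. This does not cohere with the morass machinery: the relations on $\omega_1$ are only those pushed forward from the finite structures $I_\alpha$ by the maps in $\FF_{\alpha,\omega}$, and a single point $t\in\theta_\alpha$ has many images $f(t)$ for $f\in\FF_{\alpha,\omega}$, which an arbitrary advance partition will scatter across different $S_m$. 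In particular, when $r(n)=1$ requirement (ii) imposes a $\le^\ast$-chain on $A_n$, and that chain is copied by \emph{every} $f\in\FF_{\alpha_{n+1},\omega}$ to a chain of ordinals over whose $S_m$-membership you have no control; some copy will place two chain elements in one $S_m$ and contradict the axiom that distinct elements of $B_m$ are $d$-related. (Your phrase ``its $k$-th element lying in $S_{g(n,k)}$'' also conflates the finite-level index $i_k\in\theta_{\alpha_{n+1}}$ with the ordinals it is eventually mapped to.) The repair is exactly what the paper does: make the $B_n$ unary predicates of the finite structures themselves, preserved by the $\LL$-embeddings of clause (i), and only afterwards read off $B_n\subseteq\omega_1$ as the union of images; your bookkeeping then just assigns labels at the finite level, which is fine.

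Your second departure, the treatment of the nice property, is a legitimately different route. You add the axioms ``$\bigvee F\neq 1$'' for all finite sets of generators to the theory and check finite satisfiability by realizing any finite fragment by clopen subsets of $2^\omega$ inside a fixed proper cylinder; compactness then yields an algebra on the generators with no finite join equal to $1$. This can be made to work, though it rests on extracting the right model from the compactness step (your remark that the nice property is ``universal, hence preserved'' is not the right justification --- it is not a sentence of $\LL$ about $I$ at all, but an extra scheme on the Boolean-algebra side). The paper instead builds an explicit internal witness: it widens the gaps to $\alpha_{n+1}\ge\alpha_n+n+2$, reserves one further point $a_n$ of $\theta_{\alpha_{n+1}}$ outside $\bigcup_{\beta\le\alpha_n,\,f\in\FF_{\beta,\alpha_{n+1}}}\rng(f)\cup A_n$, declares $a_n$ to be $d$-related to everything previously constructed and puts it in a fresh class $B_m$; then any finite $F\subseteq I$ factors through some level $\alpha_n$, and the image $f(a_n)$ is a positive element disjoint from all of $F$, witnessing $\bigvee F\neq 1$ in any algebra realizing the relations. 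With the level-wise predicates in place, either treatment of the nice property completes the proof.
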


To avoid repetitions, we present the proof as a variant of the proof of Theorem\ref{Cohenreal}, to which we refer throughout.

\medskip

\begin{proof}
Fixing a neat morass as in the proof of Theorem \ref{Cohenreal},
we shall define a Boolean algebra $\alg$ similarly as in the proof of that theorem. Let us change the
definition of $\LL$ from the proof of Theorem \ref{Cohenreal} to be a more complex language 
$\LL=\{\le^\ast, d, B_n: n <\omega\}$ in which
both $\le^\ast$ and $d$ are binary relation symbols and all $B_n$ are unary
relation symbols.
Let $T$ be the following set of $\LL$-sentences:
\begin{itemize}
\item $\le^\ast$ is a partial order,
\item $d$ is symmetric and antireflexive,
\item for all $x,y$, the statements $d(x,y)$ and $\le^\ast (x,y)$
are contradictory,
\item for each pair $n\neq m$, for every $x$, $B_n(x)$ and $B_m(x)$ are contradictory,
\item for each $n$, and for every $x\neq y$ with $B_n(x), B_n(y)$ we have $d(x,y)$.
\end{itemize}

A part of our aim is to obtain a model $I$ of $\varphi$ on $\omega_1$, and then interpreting 
$\le^\ast$ as the Boolean $\le$ and $d$ as the disjointness relation in the language of Boolean algebras, generate $\alg$ by $\omega_1$ freely except for the relations in $I$. As in the proof of 
Theorem \ref{Cohenreal}, this is possible by the
compactness theorem since we have defined $T$ so that it is consistent with the axioms for being a set of generators for a Boolean algebra- this can be checked by taking any $c$-algebra as an example. The sentences of $T$
guarantee that $I$ is a disjoint union of $B_n$s and that each $B_n$ is an antichain. However, we shall
have to work a bit harder to obtain that $I$ has the nice property. For this we shall modify the definition of the ordinals $\alpha_n$ so that we shall now inductively choose $\alpha_n<\omega$ so that
$\alpha_{n+1}\ge \alpha_{n} + n + 2$.
By Lemma \ref{increasing}(3) we can fix sets $A_n$ as before, consisting of some
$n+1$ elements of $\theta_{\alpha_{n+1}}\setminus 
\bigcup_{\beta\le\alpha_n,  f \in \FF_{\beta, \alpha_{n+1}} }\rng(f)$ and let also $a_n$ be an element of
$\theta_{\alpha_{n+1}}\setminus  ( \bigcup_{\beta\le\alpha_n,  f \in \FF_{\beta, \alpha_{n+1}}} \rng(f)\cup A_n)$. The requirements of the induction will be the {\bf (i)},
{\bf (ii)} and {\bf (iii)} from the proof of Theorem \ref{Cohenreal}, with the additional requirement 

\begin{description}
\item[(iv)] 
$a_n$ is in the $d$ relation
with every element of $\bigcup_{\beta\le\alpha_n,  f \in \FF_{\beta, \alpha_{n+1}}} \rng(f)$.
\end{description}

To see that it is possible to meet the inductive requirements, for {\bf (i)}-
{\bf (iii)} we use basically the same argument as before, except that we make sure that every element is in some $B_k$ and for {\bf (iv)} we simply choose a large
enough $m$ so that the $B_m$ relation is still empty and we declare $a_n$ a member
of $B_m$. 

It remains to verify that the set $I$ obtained in the limit of the construction has the nice property.
So let $F$ be a finite subset of $I$ and let $\alpha$ be such that for some $h \in \FF_{\alpha,\omega}$
we have that $F\subseteq \rng(h)$. Finding such $\alpha$ is possible by noting that if some
$i <\omega_1$ belongs to $\rng(h)$ for some $h\in \FF_{\alpha,\omega}$ and some $\alpha< \omega$
then for every $\beta \in [\alpha, \omega)$, $i \in \rng(h')$ for some $h'\in \FF_{\beta,\omega}$, by the
property 4. of a neat morass. By a similar reasoning we can assume that $\alpha=\alpha_n$ for
some $n$. Let $H=h^{-1}(F)$. Again applying property 4. of a neat morass, 
we can decompose $h=f\circ g$ for some $g\in \FF_{\alpha_n, \alpha_{n+1}}$ and some
$f \in \FF_{\alpha_{n+1}, \omega}$. Notice that $a_n$ is in the $d$-relation with every element of
$g``(H)$. By the choice of $f$ and the definition of $I$ it follows that $f(a_n)$ is in the $d$ relation
with every element of $F$, hence this positive element $f(a_n)$ in
$\alg$ is disjoint from every element of $F$, so witnessing that $\bigvee F\neq 1$.
$\eop_{\ref{Cohenrealc}}$
\end{proof}

\section{Negative universality results for UG spaces in the iterated Cohen extension and further results on larger densities} 

We shall now explain how the results proved in Section \ref{Cohenlike} have as a consequence that in the standard Cohen model for
violating CH there are no universal Banach spaces, and moreover that this can be witnessed by a UG space. We also show that the same is true at larger densities and in the extensions obtained by Cohen-like forcing. It is a well known theorem of Shelah (see \cite{KjSh409}, Appendix, for a proof) that in the extension obtained by adding a regular 
$\kappa\ge \lambda^{++}$ number of
Cohen subsets to a regular $\lambda$ over a model of GCH, the universality number for models of size $\lambda^+$ for
any unstable complete first order theory is $\kappa=2^\lambda$, so the maximal possible. This in particular applies to Boolean algebras. This result is implied by our results since, as we now explain,
if there is no universal Banach space of a given density $\theta$, then there is no universal
Boolean algebra of size $\theta$. This is the case because standard arguments (see Fact 1.1 in 
\cite{BrKo} imply that every Banach space can be embedded into a Banach space of the same density and the form $C(K)$ for some 0-dimensional space $K$. Combining that with the Stone representation theorem and further standard observations about preservation of isomorphisms, we obtain that the universality number of Banach spaces under isomorphism is never larger than the universality number of Boolean algebras under the isomorphism of Boolean algebras.
 
\subsection{Negative universality results for UG space in the standard Cohen model}\label{standard}
By the standard Cohen model we mean a model obtained by forcing over a model 
${\bf V}$ of $GCH$ to add $\aleph_2$  Cohen reals to obtain a model where $2^{\aleph_0}=
2^{\aleph_1}
=\aleph_2$ (other values of $2^{\aleph_0}$ are possible to obtain in the same way, of course). We claim that the universality number for Banach spaces of density $\aleph_1$ in this model is 
$\aleph_2$, which is the maximal possible value by the above remarks and the fact that in the model
there are only $\aleph_2$ pairwise non-isomorphic Boolean algebras of size $\aleph_1$. Moreover,
this is witnessed by UG spaces (hence their universality number is also  $\aleph_2$), as we shall
show in Theorem \ref{noninvUG}.

\begin{theorem}\label{noninvUG} In the standard Cohen model for $2^{\aleph_0}=\aleph_2$ there is
a family $\FF$ of $\aleph_2$ many UG spaces of density $\aleph_1$ such that no family of $<\aleph_2$
Banach spaces of density $\aleph_1$ suffices to isomorphically embed all members of $\FF$.
\end{theorem}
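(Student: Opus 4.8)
The plan is to deduce Theorem~\ref{noninvUG} from Theorem~\ref{Cohenrealc} by a reflection (``catch the tail'') argument inside the standard Cohen model, exploiting the product structure of the forcing $\Fn(\aleph_2\times\omega,2)$.

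First I would fix the ambient model: let $\V\models{\rm GCH}$ and let $G$ be generic over $\V$ for $\PP=\Fn(\aleph_2\times\omega,2)$, so that $\V[G]$ is the standard Cohen model, where $2^{\aleph_0}=2^{\aleph_1}=\aleph_2$. For each $i<\aleph_2$ factor $\PP\cong\PP_i\times\CC_i$, with $\PP_i=\Fn((\aleph_2\setminus\{i\})\times\omega,2)$ and $\CC_i=\Fn(\{i\}\times\omega,2)$; the latter is simply Cohen forcing, and by the product lemma $\V[G]=\V[G_i][r_i]$, where $G_i$ is $\PP_i$-generic over $\V$ and $r_i$ is a single Cohen real over $\V[G_i]$. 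Since $\V[G_i]$ is a model of ZFC it carries a neat simplified $(\omega,1)$-morass (by Velleman, and in fact the morass already present in $\V$ still works, $\omega_1$ being preserved). Applying Theorem~\ref{Cohenrealc} with ground model $\V[G_i]$ and Cohen real $r_i$ then yields in $\V[G]$ a $c$-algebra $\alg_i$ of size $\aleph_1$ for which $X_i:=C({\rm St}(\alg_i))$ is a UG space of density $\aleph_1$ that does not isomorphically embed into any Banach space of density $\aleph_1$ lying in $\V[G_i]$. I would then set $\FF=\{X_i:\,i<\aleph_2\}$, a family of $\aleph_2$ UG spaces of density $\aleph_1$.

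For the main point, suppose towards a contradiction that $\{Y_\xi:\,\xi<\mu\}$ with $\mu<\aleph_2$ (so $\mu\le\aleph_1$) is a family of density-$\aleph_1$ Banach spaces in $\V[G]$ such that each member of $\FF$ isomorphically embeds into some $Y_\xi$. A routine nice-name computation, using that $\PP$ is ccc, shows that each $Y_\xi$ is isometric in $\V[G]$ to a Banach space lying in $\V[G\rest S_\xi]$ for a suitable $S_\xi\in[\aleph_2]^{\le\aleph_1}$: a dense $\mathbb{Q}$-linear subspace of $Y_\xi$ together with the values of its norm is coded by $\aleph_1$ reals, each such real has a nice $\PP$-name mentioning only countably many coordinates, and $S_\xi$ is the union of these coordinate sets; completing the so-coded dense subspace inside $\V[G\rest S_\xi]$ produces the isometric copy. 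Put $S=\bigcup_{\xi<\mu}S_\xi$, so $|S|\le\aleph_1\cdot\aleph_1=\aleph_1<\aleph_2$, and choose $i<\aleph_2$ with $i\notin S$. Then $S\subseteq\aleph_2\setminus\{i\}$, hence $\V[G\rest S]\subseteq\V[G_i]$, so every $Y_\xi$ is isometric to a density-$\aleph_1$ Banach space belonging to $\V[G_i]$. But $X_i$ isomorphically embeds into some $Y_{\xi(i)}$; composing with the isometry gives an isomorphic embedding of $X_i$ into a density-$\aleph_1$ Banach space of $\V[G_i]$, contradicting the defining property of $X_i$. Hence no family of size $<\aleph_2$ works. (Since $2^{\aleph_1}=\aleph_2$ in $\V[G]$ there are only $\aleph_2$ density-$\aleph_1$ Banach spaces up to isometry, so $\aleph_2$ is also attained; and the $X_i$ are even pairwise non-embeddable, since for $j\neq i$ one has $X_j\in\V[G_i]$.)

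The one place I expect to need genuine care is the interface with Theorem~\ref{Cohenrealc}: checking that $\V[G]$ really is the extension of $\V[G_i]$ by exactly one Cohen real (so that Theorem~\ref{Cohenrealc} applies verbatim with $\V[G_i]$ playing the role of its ``ground model''), and that the properties ``has density $\aleph_1$'' and ``is isometric to a space of the inner model'' transfer correctly across $\V[G_i]\subseteq\V[G]$. The remaining ingredients — the nice-name reflection of each $Y_\xi$ into some $\V[G\rest S_\xi]$, the existence of a neat morass in $\V[G_i]$, the UG-ness of $X_i$ (which is already part of the conclusion of Theorem~\ref{Cohenrealc}), and the cardinal arithmetic of the standard Cohen model — are routine.
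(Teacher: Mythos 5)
Your proof is essentially correct, but it follows a genuinely different route from the paper's. The paper first reduces to showing that no \emph{single} space $X_\ast=C({\rm St}(\alg_\ast))$ of density $\aleph_1$ is universal, reflects the Boolean algebra $\alg_\ast$ (not the Banach space) to an intermediate model of the iteration, and then confronts head-on the difficulty that $C({\rm St}(\alg_\ast))$ keeps acquiring new elements as reals are added: this is handled by a separate forcing argument (Lemma \ref{isometries}), which approximates $T(\chi_{[i]})$ by rational simple functions $h_i$ lying in the ground model, runs a $\Delta$-system argument on the conditions $p_i$, and derives the contradiction from the two incompatible extensions $q',q''$ deciding the generic algebra differently. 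You instead exploit the product structure of ${\mathbb P}={\rm Fn}(\aleph_2\times\omega,2)$: factor out a single coordinate $i$, apply Theorem \ref{Cohenrealc} as a black box over $\V[G_i]$, and reflect each candidate target $Y_\xi$ into $\V[G\rest S_\xi]$ by nice names for a dense ${\mathbb Q}$-linear subspace and its norm, choosing $i$ outside $\bigcup_\xi S_\xi$. What your approach buys is economy --- no new forcing argument is needed beyond Theorem \ref{Cohenrealc}, and you handle families of size $<\aleph_2$ directly without the ``universality number is $1$ or $\aleph_2$'' reduction; what it costs is (a) reliance on mutual genericity of the product decomposition, which is special to Cohen forcing and would not survive a genuine iteration, and (b) the need to read Theorem \ref{Cohenrealc} as ruling out embeddings into the \emph{$\V[G]$-completion} of a ground-model space (the object in $\V[G\rest S_\xi]$ is not literally complete, nor literally isometric to $Y_\xi$, in $\V[G]$). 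Point (b) is exactly the subtlety the paper flags (``$C({\rm St}(\alg_\ast))$ keeps changing''); it is harmless for you because the proof of Theorem \ref{Cohenreal}/\ref{Cohenrealc} only ever uses the fixed ground-model dense set $\{z_i:\,i<\omega_1\}$, which remains dense in the completion, but you should state the reflection as ``$Y_\xi$ is the completion in $\V[G]$ of a normed space of $\V[G\rest S_\xi]$'' and note explicitly that the cited theorem applies in that form.
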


\begin{proof} Let as stated $\bf V$ be a model of $GCH$ and 
let ${\mathbb P}$ denote the single step Cohen forcing, which we shall for concreteness conceive
as the partial order consisting of finite partial functions from $\omega$ to $2$, ordered by
extension.  Let ${\mathbb Q}$ be the iteration of $\omega_2$ steps of ${\mathbb P}$. Then in the extension by
${\mathbb Q}$ we have that $2^{\aleph_0}=\aleph_2$ and the new reals are added throughout the iteration. Note that by the same considerations mentioned above and the
fact that any $\aleph_1$ Boolean algebras of size $\aleph_1$  can easily be embedded into a single 
Boolean algebra of size $\aleph_1$, the universality number for Banach spaces of density $\aleph_1$
is either 1 or $\aleph_2$, so it suffices to show that there is no single Banach space $X_\ast$ of
density $\aleph_1$ which embeds all UG spaces of density $\aleph_1$.
Suppose for a contradiction that there is such a space $X_\ast$. 

We can by the same argument as above assume that
$X_\ast=C({\rm St}(\alg_\ast))$ for some Boolean algebra $\alg_\ast$ of size
$\aleph_1$. By standard arguments about the iteration of forcing, we can
find an intermediate universe in the iteration, call it ${\bf V'}$ for simplicity, which contains
$\alg_\ast$. If we could say that $C({\rm St}(\alg_\ast))$ is also in ${\bf V'}$, then we could apply Theorem
\ref{Cohenrealc} to conclude that for the generic c-algebra ${\mathfrak B}$ added to ${\bf V'}$ by the 
next iterand in the Cohen iteration, we have that
$C({\rm St}({\mathfrak B}))$ does not embed into $C({\rm St}(\alg_\ast))$ and we would be done. 
Note that by the factoring properties of the Cohen iteration it suffices to work with the case
${\bf V'}=\V$. The problem
is that since we keep adding reals, $C({\rm St}(\alg_\ast))$ keeps changing from ${\bf V}$ to the final universe and
hence there is no contradiction. We resolve this difficulty by a use of simple functions with rational coefficients. 

\begin{lemma}\label{isometries} Let $G$ be a generic for ${\mathbb Q}$.
Then in $\V[G]$ no Banach space
$C({\rm St}(\alg_\ast))$ where $\alg_\ast$ is in $\V$
isomorphically embeds all UG spaces of density $\aleph_1$.\footnote{A preliminary
version of this lemma was stated for isometric embeddings and upon hearing about it, Saharon Shelah suggested that
it should work for the isomorphic embeddings as well.}. 
\end{lemma}

\begin{proof} Suppose that $\alg_\ast$ invalidates the lemma and let $\alg$ be the c-algebra of
size $\aleph_1$ added by ${\mathbb P}$ as in the proof of Theorem \ref{Cohenrealc}, with $K$ its Stone space. Let $p^\ast \in G$ force $\dot{T}$ to be an isomorphic embedding from the $C(K)$ to $X_\ast=C({\rm St}(\alg_\ast))$ with $||T||\le c< n_*$.  
Let $\varepsilon>0$ be small enough, precisely $\varepsilon<\min\left\{\dfrac{n_*-c}{n*},
\dfrac{n^2_*+1-cn_*}{cn_*^2+1}\right\}$. We shall think of the model $I$ obtained in the construction
of $\alg$ in the proof of Theorem \ref{Cohenrealc} as being enumerated as $\{i\,:i<\omega_1\}$, so we
have $\omega_1\subseteq \alg$ and the notation $[i]$ refers to the basic clopen set in $K$ induced
by $i$.
For $i<\omega_1$ let $p_i\ge p^\ast$
force that $h_i$ is a simple function with rational coefficients  (so $h_i\in \V$) satisfying $||\dot{T}(\chi_{[i]})- h_i||<\varepsilon$. Since the Cohen forcing is countable, we can assume by passing to
an uncountable set of conditions that the first coordinate $p_i(0)$ is a fixed condition $r¬\ast$
in the Cohen forcing, and by applying standard arguments using the $\Delta$-system Lemma
we can also assume that for any finite $F\subseteq \omega_1$ the set $\{p_:\, i \in F\}$ has a least common
upper bound $\bigcup_{i\in F} p_{i}$ in ${\mathbb Q}$. 
Let $m$ be such that ${\rm dom}(r^\ast)\subseteq m$. Now applying (a simplified version) of the reasoning 
behind the choice of $\langle \alpha_n:\, n<\omega \rangle $ in the proof of Theorem \ref{Cohenrealc},
we can find a large enough $n>n *^2$ such that $m\le \alpha_n$. Hence among the first elements 
of $A_n$
we can find a set $F=\{i_0, i_1 , \ldots i_{n_*^2}\}$ of size ${n_*^2}+1$ such that $r^\ast$ has two extensions
$r'$ and $r''$ with $r'$ forcing in ${\mathbb P}$ that ${i_k}$s are disjoint in $\alg$
and $r''$ forcing that ${i_k}$s are increasing in $\alg$. Now we can find two extensions
$q'$ and $q'$ of $\bigcup_{i\in F} p_{i}$ in ${\mathbb Q}$ such that $q'(0) \ge r'$ and
$q''(0)\ge r''$.

Without loss of generality, by extending conditions $p_i$ further if necessary,
we can assume that ${\rm dom}(p_i(0))$ is such that for some $\alpha\in {\rm dom}(p_i(0))$ and
some $f_i\in \FF_{\alpha,\omega}$ (using the notation of the proof of Theorem \ref{Cohenrealc})
we have $i\in {\bf \rng}(f_i)$. However the value of $||\Sigma_{k=0}^{n^2_*}h_{i_k}||$ is decided
in $\V$, and if $||\Sigma_{k=0}^{n^2_*}h_{i_k}||\ge n^*$, it follows by the choice of $\varepsilon$ that there cannot
be an extension of $\bigcup_{k=0}^{n^2_*} p_{i_k}$ forcing ${i_k}$s and to be disjoint, a contradiction, and if  $||\Sigma_{k=0}^{n^2_*}h_{i_k}||<n^*$
then it follows that there cannot
be an extension of $\bigcup_{k=0}^{n^2_*} p_{i_k}$ forcing ${i_k}$ to be increasing with $k$, again a contradiction.
$\eop_{\ref{isometries}}$
\end{proof}

$\eop_{\ref{noninvUG}}$
\end{proof}

\subsection{Negative universality results for densities larger than $\aleph_1$}
Let us now consider the situation of adding Cohen subsets to cardinals $\lambda$ possibly larger than
$\omega$.
In this situation we cannot adapt the above techniques to prove that just adding one such subset adds interesting spaces of density  $\lambda^+$. The major difficulties are that the analogues of  neat morass are considerably more complicated and do not just exist in ZFC (although they do exist in the constructible universe ${\bf L}$, for example) and that the Cohen forcing for adding a subset to $\lambda$ no longer satisfies the convenient property given by Lemma \ref{guessingCohen}. However, we can still get negative universality results by generically adding Boolean algebras in an iteration of Cohen-like forcing, as we now show. 

Let $\lambda=\lambda^{<\lambda}$.
Fix a set $A=\{a_i:\,i<\lambda^+\}$ of indices which we shall assume forms an unbounded co-unbounded subset of
$\lambda^+$.

\begin{definition} The forcing  ${\mathbb P}(\lambda)$ consists of Boolean algebras $p$ generated on a subset of $\lambda^+$ by some subset $w_p$ of $A$ of size $<\lambda$
satisfying $p\cap A=w_p$.
The ordering on  ${\mathbb P}$ is given by $p\le q$ if $p$ is embeddable as a subalgebra of $q$ and the embedding fixes
$w_p$, where in our notation $q$ is the stronger condition.
\end{definition}

Let us check some basic properties of the forcing ${\mathbb P}(\lambda)$, reminding the reader of the following notions:

\begin{definition}\label{ccclosed} (1) A forcing notion ${\mathbb P}$ is said to be $\lambda^+$- {\em stationary cc} if
for every set $\{p_i:\,i<\lambda^+\}$ of conditions in ${\mathbb P}$, there is a club $C$ and a regressive function $f$
on $C$ satisfying that for every $i,j<\lambda^+$ of cofinality $\lambda$ satisfying $f(i)=f(j)$, the conditions $p_i$ and $p_j$ are
compatible.

{\noindent (2)} A subset $Q$ of a partial order $P$ is {\em directed} if every two elements of $Q$ have an upper bound in
$P$.  A forcing notion ${\mathbb P}$ is $(<\lambda)$-directed closed if every directed $Q\subseteq {\mathbb P}$
of size $<\lambda$, has an upper bound in ${\mathbb P}$.
\end{definition}

\begin{lemma}\label{properties}
(1)  ${\mathbb P}(\lambda)$ satisfies the $\lambda^+$-stationary cc and is $(<\lambda)$-directed closed.

{\noindent (2)} ${\mathbb P}(\lambda)$ adds a Boolean algebra of size $\lambda^+$ generated by $\{a_i:\,i<\lambda^+\}$.

\end{lemma}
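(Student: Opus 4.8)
The plan is to verify the two structural properties in part (1) and then the generic-object claim in part (2). For $(<\lambda)$-directed closure: given a directed set $Q\subseteq{\mathbb P}(\lambda)$ of size $<\lambda$, I would take the direct limit of the corresponding directed system of Boolean algebras (along the subalgebra embeddings fixing the $w_p$'s). The limit is a Boolean algebra whose set of $A$-generators is $\bigcup_{p\in Q}w_p$, which has size $<\lambda$ since $\lambda$ is regular (indeed $\lambda=\lambda^{<\lambda}$), so we may realize it on a subset of $\lambda^+$ as an element of ${\mathbb P}(\lambda)$ above all members of $Q$; one must check the defining condition $p\cap A=w_p$ is preserved in the limit, which is immediate since the embeddings fix the distinguished generators and introduce no new elements of $A$. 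For the $\lambda^+$-stationary cc, given $\{p_i:\,i<\lambda^+\}$, first note each $p_i$ has size $<\lambda$ and $w_{p_i}=p_i\cap A$ is a subset of $A$ of size $<\lambda$; writing $w_{p_i}=\{a_\xi:\,\xi\in u_i\}$ for some $u_i\in[\lambda^+]^{<\lambda}$, apply the $\Delta$-system lemma (valid because $\lambda^{<\lambda}=\lambda$) to thin out to a stationary set on which the $u_i$ form a $\Delta$-system with root $u$. Then the obvious regressive function on points of cofinality $\lambda$ is $f(i)=\sup(u_i\cap i)$ (or an isomorphism-type code of $p_i$ together with this sup); by a further stationary thinning we may assume all $p_i$ have the same isomorphism type over their root. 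Two conditions $p_i,p_j$ with $f(i)=f(j)$ and, say, $i<j$ then have $u_i\subseteq j$ up to the root, so by the $\Delta$-system structure and matching isomorphism types one can amalgamate $p_i$ and $p_j$ over the common subalgebra generated by the root: take the free product of $p_i$ and $p_j$ amalgamated over that subalgebra, which is again a Boolean algebra generated by $w_{p_i}\cup w_{p_j}$, of size $<\lambda$, and hence (re-realized on $\lambda^+$) an element of ${\mathbb P}(\lambda)$ stronger than both.

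For part (2): by $(<\lambda)$-directed closure the forcing adds no new $<\lambda$-sequences, and by a density argument every $a_i\in A$ eventually enters the generic filter's union — the sets $D_i=\{p:\,a_i\in w_p\}$ are dense, since given any $p$ we can freely adjoin the new generator $a_i$ to $p$ (the free product of $p$ with the two-element algebra generated by $a_i$), obtaining a condition $q\ge p$ with $a_i\in w_q$. Hence $\name{\alg}:=\bigcup\{p:\,p\in G\}$ is a directed union of Boolean algebras, so a Boolean algebra, with underlying set a subset of $\lambda^+$, and it is generated by $\{a_i:\,i<\lambda^+\}$ since every element of every condition in $G$ lies in a subalgebra generated by finitely many of the $a_i$. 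Its size is exactly $\lambda^+$: it is at most $\lambda^+$ as a subset of $\lambda^+$, and at least $\lambda^+$ because it contains all $\lambda^+$ distinct generators $a_i$ (distinctness being forced, as in each condition the $a_i\in w_p$ are distinct nonzero elements).

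The main obstacle I anticipate is the amalgamation step in the proof of the stationary cc: one needs that whenever two conditions restrict to the same subalgebra over their $\Delta$-system root and have matching isomorphism type relative to it, their amalgamated free product is again a \emph{condition}, i.e. that the root really is a common subalgebra (not merely a common set of generators) and that the amalgamation does not collapse $w_{p_i}\cup w_{p_j}$ or identify it with extraneous elements of $A$. This is where the bookkeeping in the stationary thinning has to be done carefully — one should code into $f(i)$ (or into the preliminary thinning) not just $\sup(u_i\cap i)$ but enough of the quantifier-free diagram of $p_i$ over its root to guarantee the amalgamation is well defined; the hypothesis $\lambda=\lambda^{<\lambda}$ is exactly what makes only $\lambda$ many such codes possible, so that the thinning still leaves a stationary set. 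Everything else is routine: directed limits and free products of Boolean algebras are standard, and the density arguments for part (2) are immediate.
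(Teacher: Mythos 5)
Your treatment of $(<\lambda)$-directed closure and of part (2) is essentially the paper's argument (the paper is slightly more careful in part (2): since the order on ${\mathbb P}(\lambda)$ is by embeddability over $w_p$ rather than literal inclusion, it passes to a set $H$ of representatives of isomorphism classes in $G$ and takes the direct limit of the resulting directed system, rather than a naive union of the conditions in $G$; your direct-limit phrasing is the right instinct, but $\bigcup\{p:p\in G\}$ as written is not a directed union of subalgebras).

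The genuine gap is in the $\lambda^+$-stationary cc. Definition \ref{ccclosed}(1) demands a club $C$ and a regressive $f$ on $C$ such that \emph{every} pair of cofinality-$\lambda$ points of $C$ with equal $f$-values indexes compatible conditions. Your plan --- thin $\{p_i\}$ to a stationary set via the $\Delta$-system lemma and a further isomorphism-type refinement, then set $f(i)=\sup(u_i\cap i)$ --- fails this on two counts. First, equal suprema (even together with an isomorphism-type code) do not determine the trace $w_{p_i}\cap i$: two conditions can have the same sup and the same abstract type yet share a generator $a_\xi$ sitting at different positions of their respective enumerations, about which they assert contradictory equations (say $a_\xi=1$ in one and $a_\xi=0$ in the other), so they are incompatible despite $f(i)=f(j)$. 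Second, a regressive function defined only on a stationary set obtained by thinning establishes at best that every $\lambda^+$-sized family has a large pairwise-compatible subfamily; that is not the club-based property that Theorem \ref{5authors} consumes, and extending $f$ by a constant off the stationary set destroys the implication. The paper's proof needs no $\Delta$-system lemma at all: it fixes a bijection $g:{\mathcal I}\times[\lambda^+]^{<\lambda}\to\lambda^+$ (where ${\mathcal I}$ is the set of isomorphism types, of size $\lambda$ by $\lambda^{<\lambda}=\lambda$) and sets $f(j)=g(t(w_j),\, w_j\cap j)$, which is regressive on a club on which $i<\gamma$ iff $w_i\subseteq\gamma$ and $g$ maps below $\gamma$. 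Then $f(i)=f(j)$ with $i<j$ forces $w_i\cap w_j=w_i\cap i=w_j\cap j$ to be a common initial segment carrying identical equations, and compatibility follows by free amalgamation over the common subalgebra. Your closing remarks show you sensed that more than the sup must be coded, but the essential missing ingredient is coding the actual set $w_{p_i}\cap i$ into the value of $f$, which is exactly what the hypothesis $\lambda^{<\lambda}=\lambda$ makes possible.
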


\begin{proof} For part (1), suppose that $\{p_i:\,i<\lambda^+\}$ are conditions in ${\mathbb P}(\lambda)$. Let us denote
by $w_i$ the set $w_{p_i}$ and let us consider it in its increasing enumeration. Let the {\em isomorphism type} of $w_i$
be
determined by the order type of $w_i$ and the Boolean algebra equations satisfied between the elements of $w_i$, denote this by $t(w_i)$. Note that by 
$\lambda^{<\lambda}=\lambda$ the cardinality of the set $\mathcal I$
of isomorphism types is exactly $\lambda$, so let us fix a bijection 
$g:\,{\mathcal I}\times [\lambda^+]^{<\lambda} \to \lambda^+$. 
Note that there is a club $C$ of $\lambda^+\setminus 1$ such that for every point $\gamma$ in $C$ of cofinality $\lambda$ we have
$i<\gamma$ iff $w_i\subseteq \gamma$ and $g``(\,{\mathcal I}\times\gamma)\subseteq\gamma$. Define $f$ on $C$ by letting $f(j)=g(t(w_j), w_j\cap j)$ for $j$ of cofinality $\lambda$ and $0$ otherwise. Hence $f$ is regressive on $C$
and if $i<j$ both in $C$ have cofinality $\lambda$ and satisfy $f(i)=f(j)$ then we have that $w_i\cap i=w_j\cap j= w_i\cap w_j$ and that $p_i$ and $p_j$
satisfy the same equations on this intersection. Hence $p_i$ and $p_j$ are compatible.

For the closure, note that a family of conditions in ${\mathbb P}(\lambda)$ being $(<\lambda)$-directed in particular implies
that the conditions in any finite subfamily agree on the equations involving any common elements. Therefore the union of the
family generates a Boolean algebra which is a common upper bound for the entire family.  

For part (2), note that if $p\in {\mathbb P}$ and $a_i$ is not in $w_p$, then we can extend $p$ to 
$q$ which is freely generated by $w_p \cup\{a_i\}$, except for the equations present in $p$. Hence the set
$\DD_i=\{p:\,a_i\in w_p\}$ is dense. Note that if $p,q\in G$ satisfy
$p\cap A=q\cap A$, then $p$ and $q$ are isomorphic over $w_p$. Let $H$ be a subset of $G$ which is obtained by
taking one representative of each isomorphism class with the induced ordering, hence $H$ is still a filter in ${\mathbb P}$ and it 
intersects every $\DD_i$. By the downward closure of $G$ it follows that for
every $F$ a subset of $A$ of size $<\lambda$, there is exactly one element $p_F$ of $H$ with $w_{p_F}=F$. (Note that no 
new bounded subsets of $\lambda$ are added by ${\mathbb P}$, since (1) holds).
Now note that $\{p_F:\,F\in [A]^{<\lambda}\}$ with the relation of embeddability over $A$ form a directed system of Boolean
algebras directed by $([A]^{<\lambda}, \subseteq)$. Let $B$ be the limit of this system (see \cite{Ha} for an explicit
construction of this object), so $B$ is generated by $\{a_\alpha:\,\alpha<\lambda\}$ and it embeds every element of $H$.
(We shall call this algebra ``the" generic algebra. It is unique up to isomorphism).
$\eop_{\ref{properties}}$
\end{proof}

Note that in the case of $\lambda=\aleph_0$, we in particular have that the forcing is ccc.

\begin{theorem}\label{nonembedding} If $\alg$ is the generic Boolean algebra for some  ${\mathbb P}(\lambda)$, then there is no Banach space $X_\ast$ in the ground model such 
that $C({\rm St}(\alg))$ isomorphically embeds into $X_\ast$ in the extension by ${\mathbb P}(\lambda)$.
\end{theorem}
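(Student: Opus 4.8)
The plan is to mirror the density-$\aleph_1$ argument of Theorems \ref{Cohenreal} and \ref{Cohenrealc}, replacing the role of the neat morass by the forcing ${\mathbb P}(\lambda)$ itself together with its $\lambda^+$-stationary chain condition from Lemma \ref{properties}(1). Suppose toward a contradiction that $X_\ast\in{\bf V}$ has density $\lambda^+$ and that some condition $p^\ast$ forces that $\dot T$ is an isomorphic embedding of $C({\rm St}(\alg))$ into $X_\ast$, where $\alg$ is the generic algebra with generators $\{a_i:\,i<\lambda^+\}$. Fix $n_\ast\ge 3$ so that $p^\ast$ forces $\frac1{n_\ast}\|x\|<\|\dot T(x)\|<n_\ast\|x\|$ for all $x\in C({\rm St}(\alg))$, and fix in ${\bf V}$ a dense set $\{z_\xi:\,\xi<\lambda^+\}$ of $X_\ast$. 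For each $i<\lambda^+$ choose a condition $p_i\ge p^\ast$ and an ordinal $j(i)<\lambda^+$ such that $p_i$ forces $\|\dot T(\chi_{[a_i]})-z_{j(i)}\|<\frac1{n_\ast^2+1}$; since $X_\ast$ and its dense set are in ${\bf V}$, the index $j(i)$ may be chosen in ${\bf V}$.

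Next I would apply the $\lambda^+$-stationary cc to the family $\{p_i:\,i<\lambda^+\}$, but one needs a bit more uniformity than compatibility alone, so first thin out: by $\lambda^{<\lambda}=\lambda$ there are only $\lambda$ isomorphism types of the finite (more precisely $<\lambda$-sized) configurations $(w_{p_i},\, \text{equations},\, a_i\text{'s position},\, j{\restriction}w_{p_i})$, and only $\lambda$ possible values recorded by a $\Delta$-system root, so we may assume the $p_i$ form a $\Delta$-system whose root conditions are pairwise isomorphic over their common part and such that $i\mapsto j(i)$ is, on a stationary set, either constant or injective with range escaping the root — exactly the structure that lets finitely many $p_i$ be amalgamated freely. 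Concretely: given $n_\ast$, I claim the set of conditions $q\ge p^\ast$ that for some $i_0<i_1<\dots<i_{n_\ast^2}$ \emph{decide} whether $\|z_{j(i_0)}+\dots+z_{j(i_{n_\ast^2})}\|<n_\ast-1$ and force the generators $a_{i_0},\dots,a_{i_{n_\ast^2}}$ to be pairwise disjoint in $\alg$ in the first case and linearly $\le_\alg$-ordered in the second, is dense below $p^\ast$. Granting this, a generic $q$ below $p^\ast$ produces indices $i_0<\dots<i_{n_\ast^2}$ realizing the dichotomy $(\ast)$ of the proof of Theorem \ref{Cohenreal}, and then the identical norm computation — estimating $\|x_{i_0}+\dots+x_{i_{n_\ast^2}}\|$ against $\|z_{j(i_0)}+\dots+z_{j(i_{n_\ast^2})}\|$ up to the total error $(n_\ast^2+1)\cdot\frac1{n_\ast^2+1}=1$, versus $\frac1{n_\ast}\|\chi_{[a_{i_0}]}+\dots+\chi_{[a_{i_{n_\ast^2}}]}\| = \frac{n_\ast^2+1}{n_\ast}>n_\ast$ — contradicts the choice of $n_\ast$, with the other case symmetric.

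To verify the density claim, start from any $q_0\ge p^\ast$. Using that $\{p_i\}$ (after thinning) has the amalgamation structure above, pick $n_\ast^2+1$ indices $i_0<\dots<i_{n_\ast^2}$ all lying outside $w_{q_0}$ and outside the root, with the $p_{i_k}$'s pairwise compatible and compatible with $q_0$; let $q_1$ be a common extension of $q_0$ and all the $p_{i_k}$. The value $\|z_{j(i_0)}+\dots+z_{j(i_{n_\ast^2})}\|$ is computed in ${\bf V}$, so it is already decided. Now, because the generators $a_{i_0},\dots,a_{i_{n_\ast^2}}$ are \emph{fresh} in $q_1$ (they lie outside $w_{q_0}$ and, by the $\Delta$-system choice, the equations the $p_{i_k}$ impose on them do not force any relation among them), I can further extend $q_1$ to $q_2$ by adding to the generating set the finitely many equations making the $a_{i_k}$ pairwise disjoint — if the decided norm is $<n_\ast-1$ — or making them a $\le_\alg$-chain otherwise; such an extension exists in ${\mathbb P}(\lambda)$ by the same freedom-of-generation argument used in the proof of Lemma \ref{properties}(2), since neither a finite antichain nor a finite chain among fresh generators contradicts the Boolean axioms. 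Then $q_2\ge q_0$ is in the dense set, proving the claim.

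The main obstacle is the bookkeeping in the thinning step: unlike the countable Cohen forcing, where Lemma \ref{guessingCohen} hands one a ground-model reflection of an arbitrary $\omega_1\to\omega_1$ function for free, here one must extract — from the $\lambda^+$-stationary cc plus a Fodor/$\Delta$-system argument on a stationary set of ordinals of cofinality $\lambda$ — enough pairwise-compatible conditions $p_i$ whose data $(w_{p_i},\text{type},j{\restriction}w_{p_i})$ is sufficiently coherent that finitely many of them amalgamate while leaving the chosen generators $a_{i_k}$ genuinely free. This is where $\lambda=\lambda^{<\lambda}$ is used (to bound the number of isomorphism types and $\Delta$-system roots by $\lambda$) and where a little care is needed to ensure the amalgamated condition $q_1$ does not accidentally already force a relation among the $a_{i_k}$; once that coherence is arranged, the rest is the verbatim analogue of the density-$\aleph_1$ computation.
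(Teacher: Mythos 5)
Your overall strategy --- thin the family $\{p_i\}$ to a $\Delta$-system, amalgamate finitely many conditions, and exploit the freedom of ${\mathbb P}(\lambda)$ to make the chosen generators either a $\le_{\alg}$-chain or a pairwise disjoint family so as to contradict the embedding constant --- is exactly the engine of the paper's proof, and your idea of approximating $\dot T(\chi_{[a_i]})$ by ground-model elements $z_{j(i)}$ rather than deciding its exact value is sound (it is in fact the device of Lemma \ref{isometries}). The genuine gap is in your density claim. You fix the family $\{p_i:\,i<\lambda^+\}$ and its $\Delta$-system refinement once and for all above $p^\ast$, and then, given an arbitrary $q_0\ge p^\ast$, you need $n_\ast^2+1$ of the $p_{i_k}$ to be compatible with $q_0$. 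The thinning guarantees that the $p_i$ are pairwise compatible, because they all induce the same equations on the root $w^\ast$; but it gives no control over $q_0$. If $w^\ast\setminus w_{p^\ast}\neq\emptyset$ (nothing prevents all the $p_i$ from containing a common generator $b\notin w_{p^\ast}$), the condition $q_0$ may itself contain $b$ and impose on it a Boolean equation conflicting with the one common to all the thinned $p_i$; then no member of your family is compatible with $q_0$, and the set you describe is not dense below $p^\ast$. Arranging $w_{p_{i_k}}\cap w_{q_0}\subseteq w^\ast$ by disjointness of the petals does not help, since the conflict lives on $w^\ast$ itself.

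There are two standard repairs. One is to carry out the choice of the $p_i$ and the $\Delta$-system thinning \emph{below the given $q_0$}, inside the density proof; the indices $j(i)$ then depend on $q_0$, which is harmless because the contradiction is local to the chosen $i_k$. The other, which is what the paper actually does, dispenses with density and genericity altogether: having amalgamated $p_{i_0},\dots,p_{i_{n_\ast^2}}$ into a single condition, one takes two further extensions $q'$ and $q''$, one forcing the $a_{i_k}$ to form a chain (so $\|\chi_{[a_{i_0}]}+\dots+\chi_{[a_{i_{n_\ast^2}}]}\|=n_\ast^2+1$) and one forcing them pairwise disjoint (so the norm is $1$); these force incompatible estimates for the norm of a single ground-model vector (in your approximate version, of $z_{j(i_0)}+\dots+z_{j(i_{n_\ast^2})}$), and since that norm is a real number of the ground model this is already an outright contradiction. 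If you keep the approximation route, take $n_\ast\ge c+2$ (or shrink the tolerance) so that the two estimates stay separated after absorbing the total error $(n_\ast^2+1)\cdot\frac{1}{n_\ast^2+1}=1$; and note that the two halves of the dichotomy $(\ast)$ as you state them (following the paper's own statement) are swapped relative to the norm computation: a small value of $\|z_{j(i_0)}+\dots+z_{j(i_{n_\ast^2})}\|$ must be paired with the \emph{chain} configuration, which is the one making $\frac{1}{n_\ast}\|\chi_{[a_{i_0}]}+\dots+\chi_{[a_{i_{n_\ast^2}}]}\|=\frac{n_\ast^2+1}{n_\ast}>n_\ast$.
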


\begin{proof} Let us fix a $\lambda$.
Suppose for a contradiction that there is an $X_\ast$ and embedding $T$ in the extension contradicting the statement of
the theorem. Let $p^\ast$
force that $\name{T}$ is an isomorphic embedding and, without loss of generality $p^\ast$ decides a positive constant
$c$ such that $1/c \cdot ||x||\le ||Tx||\le c\cdot  ||x||$ for every $x$. Let $n_\ast$ be a positive integer such that $c< n_\ast$.
For each $i<\lambda^+$ let $p_i\ge p^\ast$ be such that 
$a_i\in w_{p_i}$, and without loss of generality $p_i$ decides the value $x_i=T (\chi_{[a_i]})$. Now let us perform
a similar argument as in the proof of the chain condition, and in particular by passing to a subfamily of the
same cardinality we can assume that
the sets $w_{p_i}$ form a $\Delta$-system with a root $w^\ast$  . Since $a_i\in w_{p_i}$, we can assume
that $w_i\neq w^\ast$, and in particular that $a_i\in w_i\setminus w^\ast$. Let us take distinct
$i_0,  i_1, \dots   i_{n^2_\ast}$. Since there
are no equations in $p_i$ or $p_j$ that connect $a_i$ and $a_j$ for $i\neq j$, we can find two conditions $q'$ and $q''$
which both extend $p_{i_0},\ldots p_{n^2_\ast}$ and such that in $q'$ we have $a_{i_0} \le \ldots \le a_{i_{n^2_\ast}}$
and in $q'$ we have that $a_{i_0},\ldots a_{i_{n^2_\ast}}$ are pairwise disjoint.
Hence $q'$ forces that $\chi_{[a_{i_0]}}+\ldots \chi_{[a_{i_{n^2_\ast}}]}$ has norm $n^2_\ast+1$ and
 $q''$ forces that it has norm 1. Therefore the vector $x_{i_0}+\ldots x_{i_{n^2_\ast}}$ must have norm both $>n_\ast$ and 
 $<n_\ast$ in $X_\ast$, which is impossible.
$\eop_{\ref{nonembedding}}$
\end{proof}

We would now like to extend Theorem \ref{nonembedding} 
by using ``classical tricks" with iterations to obtain the non-existence of an isomorphically universal
Banach space of density $\lambda^+$ in an iterated extension, as we did with in the case
of a standard Cohen model. It turns out that
the cases $\lambda=\aleph_0$ which corresponds to adding one Cohen real and larger $\lambda$ are different. In the situation in which we add a generic Boolean algebra of cardinality $\lambda^+\ge\aleph_2$ with
conditions of size $<\lambda$, the forcing is countably closed and therefore it does not add any new reals, making the proof of the analogue of Lemma \ref{isometries} trivial- modulo Lemma 
\ref{nonembedding}. However, to finish the proof we need an iteration theorem for the forcing, in order to make sure that the cardinals are preserved (which in the case of $\lambda=\aleph_0$ is simply a well known theorem about the iteration with finite supports of ccc forcing retaining the ccc property, and hence preserving cardinals). Paper \cite{5authorsforcing} reviews various known forcing axioms and building up on them proves the following Theorem \ref{5authors}, which is perfectly suitable for our purposes.

\begin{theorem} [Cummings, D\v zamonja, Magidor, Morgan and Shelah] \label{5authors} Let $\lambda=\lambda^{<\lambda}$. Then, the iterations with $(<\lambda)$-supports
 of $(<\lambda)$-closed 
stationary $\lambda^+$-cc forcing which is countably parallel-closed, are $(<\lambda)$-closed 
stationary $\lambda^+$-cc.
\end{theorem}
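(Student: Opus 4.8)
The plan is to establish the two conclusions, $(<\lambda)$-closure and stationary $\lambda^+$-cc, by induction along the iteration $\langle\mathbb{P}_\eta,\name{Q}_\eta:\eta<\mu\rangle$, carrying along as an auxiliary invariant that the initial segments also remain countably parallel-closed (the induction needs this when treating limit stages); the $(<\lambda)$-closure and this invariant are routine, and the stationary $\lambda^+$-cc carries all the real content. Note in passing that plain $\lambda^+$-cc is \emph{not} preserved by $(<\lambda)$-support iterations, so it is essential that the chain condition has been packaged in the ``stationary'' form that does propagate. One preliminary remark is used everywhere: $\lambda=\lambda^{<\lambda}$ makes $\lambda$ regular, so a union of $<\lambda$ many supports each of size $<\lambda$ again has size $<\lambda$, which keeps the conditions closed under the relevant unions. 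For the closure I would build an upper bound of a $(<\lambda)$-sized directed family of conditions in $\mathbb{P}_\eta$ by recursion on the coordinates in the union of their supports, invoking at each coordinate the $(<\lambda)$-closure of the iterand (interpreted below the part of the bound already constructed). Preservation of countable parallel-closedness runs along the same coordinatewise recursion, amalgamating a ``parallel'' pair of increasing $\omega$-sequences of conditions one coordinate at a time using the countably-parallel-closed property of each single iterand; here it matters that the iterands themselves, and not merely the tail forcings, are assumed countably parallel-closed.

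The heart of the proof is the stationary $\lambda^+$-cc. Given conditions $\langle p_i:i<\lambda^+\rangle$ in $\mathbb{P}_\mu$, I would fix a continuous $\in$-increasing chain $\langle M_i:i<\lambda^+\rangle$ of elementary submodels of a suitable $H(\chi)$, each of size $\lambda$ and closed under sequences of length $<\lambda$ (possible since $\lambda^{<\lambda}=\lambda$), with the iteration and $\langle p_i\rangle$ in $M_0$ and $M_i\cap\lambda^+\in\lambda^+$; then $C=\{M_i\cap\lambda^+:i<\lambda^+\}$ is a club. After a preliminary $\Delta$-system reduction that arranges the supports of the $p_i$ into a $\Delta$-system whose root is absorbed by the models, each $p_i$ with $M_i\cap\lambda^+$ of cofinality $\lambda$ is recoverable from its trace in $M_i$ together with $<\lambda$ much extra data over $M_i$. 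I would let the regressive value $f(M_i\cap\lambda^+)$ code the isomorphism type of that trace-plus-data inside $M_i$; since $\lambda^{<\lambda}=\lambda$ there are only $\lambda$ such codes below a club of ordinals, so $f$ is regressive.

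It then remains to show that $f(\alpha)=f(\beta)$, for $\alpha=M_i\cap\lambda^+<\beta=M_j\cap\lambda^+$ both of cofinality $\lambda$, forces $p_i$ and $p_j$ to be compatible, and this is where the main obstacle lies. I would build a common extension by recursion along $\text{supp}(p_i)\cup\text{supp}(p_j)$: transfer decisions through the isomorphism $M_i\cong M_j$ that fixes $M_i\cap M_j$; at coordinates in the common part amalgamate the iterand names using the stationary cc of the initial segment $\mathbb{P}_\alpha$, available by the induction hypothesis; and pass limit coordinates of cofinality $\omega$ using the countably-parallel-closed property of the iterand. This last point is the crux: unlike the finite-support case $\lambda=\omega$, supports genuinely accumulate at $\omega$-cofinal coordinates, so the recursion produces there an increasing $\omega$-sequence of iterand-conditions that must be collapsed to a single condition lying below \emph{both} $p_i$ and $p_j$ at that coordinate, which is precisely the configuration the countably-parallel-closed hypothesis is tailored to resolve.

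The remaining work is organisational: the bookkeeping that makes the regressive coding record the isomorphism and the $\Delta$-system root while still taking only $\lambda$ values, and the interleaving of the inductive hypothesis about the initial segments $\mathbb{P}_\eta$ when selecting the amalgamating conditions for the names $\name{Q}_\eta$ (so that the two-step composition of stationary-$\lambda^+$-cc forcings is itself handled and the induction closes at successor and at limit stages of the iteration). Finally I would note that stationary $\lambda^+$-cc implies $\lambda^+$-cc — by Fodor, a stationary set of pairwise compatible conditions rules out a $\lambda^+$-sized antichain — so that, combined with the $(<\lambda)$-closure, the resulting iteration preserves all cardinals, which is exactly what is needed to run the ``classical tricks'' with iterations for which the theorem is invoked here.
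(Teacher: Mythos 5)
This theorem is not proved in the paper at all: it is imported from the cited five-author paper \cite{5authorsforcing}, so there is no in-paper argument to compare yours against, and your proposal has to be judged on its own. Its architecture does match the proof given in that reference: induction along the iteration carrying countable parallel-closedness as an auxiliary invariant, a routine coordinatewise treatment of the $(<\lambda)$-closure, and, for the stationary $\lambda^+$-cc, a $\Delta$-system of supports together with a regressive function coding isomorphism types over a chain of $(<\lambda)$-closed elementary submodels, with countable parallel-closedness invoked where supports accumulate at $\omega$-cofinal coordinates. Your closing observations (that stationary $\lambda^+$-cc implies $\lambda^+$-cc via Fodor, and that regularity of $\lambda$ keeps unions of fewer than $\lambda$ supports small) are correct.

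The gap is in the one step that carries all the content: showing that $f(\alpha)=f(\beta)$ forces $p_i$ and $p_j$ to be compatible. Two concrete problems. First, the entries $p_i(\eta)$ are ${\mathbb P}_\eta$-names, and an arbitrary such name is not an object whose ``isomorphism type'' ranges over only $\lambda$ values; before any regressive coding can work one must replace each $p_i$ by an extension all of whose coordinates are decided to be canonical names of size at most $\lambda$ (a fusion-type preprocessing using the closure of the initial segments). Your phrase ``recoverable from its trace in $M_i$ together with $<\lambda$ much extra data'' asserts the outcome of this reduction but does not perform it, and without it $f$ cannot be both regressive and compatibility-guaranteeing. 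Second, at coordinates $\eta$ in the root of the $\Delta$-system you propose to ``amalgamate the iterand names using the stationary cc of the initial segment ${\mathbb P}_\alpha$''; but a chain condition is a statement about $\lambda^+$-sized families and gives no handle on the compatibility of two specific ${\mathbb P}_\eta$-names for elements of the iterand. What is actually needed there is that the recorded type, transported through the isomorphism $M_i\cong M_j$ fixing $M_i\cap M_j$, makes the two names agree (or be forced compatible by the partial amalgamation $q\rest\eta$ already constructed) --- which is exactly what the missing canonical-name preprocessing buys. As written, the proposal is a correct plan whose decisive lemma is asserted rather than proved.
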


Here, the property of countable parallel-closure is defined as follows:

\begin{definition}\label{parallel-closure} Two increasing sequences $\langle p_i:\,i<\omega\rangle$ and 
$\langle q_i:\,i<\omega\rangle$ of conditions in a forcing $\mathbb P$
are said to be {\em pointwise compatible} if for each $i<\omega$ the conditions $p_i, q_i$ are compatible.
The forcing $\mathbb P$ is said to be {\em countably parallel-closed} if for every two $\omega$-sequences of pointwise
compatible conditions as above, there is a common upper bound to $\{p_i, q_i:\,i<\omega\}$ in $\mathbb P$.
\end{definition}

\begin{lemma}\label{newmet} The forcing ${\mathbb P}(\lambda)$ is countably parallel-closed.
\end{lemma}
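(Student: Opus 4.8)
The plan is to take two $\omega$-sequences of conditions $\langle p_i:\,i<\omega\rangle$ and $\langle q_i:\,i<\omega\rangle$ in ${\mathbb P}(\lambda)$, each increasing in the forcing order, and pointwise compatible, and to produce a single Boolean algebra that embeds all of them over the appropriate index sets. First I would set $p_\infty=\bigcup_{i<\omega}p_i$ and $q_\infty=\bigcup_{i<\omega}q_i$; since each sequence is increasing and hence $(<\lambda)$-directed of size $\omega<\lambda$, by the closure part of Lemma \ref{properties}(1) these unions are conditions in ${\mathbb P}(\lambda)$, with $w_{p_\infty}=\bigcup_i w_{p_i}$ and $w_{q_\infty}=\bigcup_i w_{q_i}$, each of size $<\lambda$. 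So it suffices to find a common upper bound of the single pair $p_\infty, q_\infty$. The obstacle is that $p_\infty$ and $q_\infty$ need not themselves be compatible just because each $p_i$ is compatible with $q_i$: the witnessing extensions could impose conflicting equations between the generators in $(w_{p_\infty}\cap w_{q_\infty})\setminus (w_{p_i}\cup w_{q_i})$. This is the main point to overcome.

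The key observation is that compatibility of $p_i$ and $q_i$ means precisely that $p_i$ and $q_i$ impose the same Boolean equations on their common generators $w_{p_i}\cap w_{q_i}$ (this is exactly the criterion used in the proof of Lemma \ref{properties}(1): two conditions are compatible iff they agree on their shared generators). Now I would argue that for any $a\in w_{p_\infty}\cap w_{q_\infty}$ there is an $i$ with $a\in w_{p_i}\cap w_{q_i}$ — this holds because $w_{p_\infty}=\bigcup_i w_{p_i}$ and $w_{q_\infty}=\bigcup_i w_{q_i}$ with both unions increasing, so if $a\in w_{p_{i_1}}$ and $a\in w_{q_{i_2}}$ then $a\in w_{p_i}\cap w_{q_i}$ for $i=\max(i_1,i_2)$. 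More generally, any finite set of common generators, or indeed any relevant finite Boolean equation among them, is already witnessed inside some single $w_{p_i}\cap w_{q_i}$, since increasing unions of finite (or $\omega$-indexed) data are absorbed at a finite stage. Consequently, whatever equation $p_\infty$ imposes on a finite tuple from $w_{p_\infty}\cap w_{q_\infty}$ is imposed by some $p_i$, hence (by compatibility of $p_i,q_i$) by the corresponding $q_i$, hence by $q_\infty$; and symmetrically. Therefore $p_\infty$ and $q_\infty$ agree on all Boolean equations among their common generators, i.e. $p_\infty$ and $q_\infty$ are compatible.

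Finally, I would let $r$ be the Boolean algebra generated by $w_{p_\infty}\cup w_{q_\infty}$ (a subset of $A$ of size $<\lambda$), freely except for the equations that hold in $p_\infty$ together with those that hold in $q_\infty$. Since we have just checked that these two sets of equations are mutually consistent — they disagree nowhere on the common generators, and on the rest there is no interaction — this $r$ is a well-defined condition in ${\mathbb P}(\lambda)$ with $w_r=w_{p_\infty}\cup w_{q_\infty}$, and both $p_\infty$ and $q_\infty$ embed into it over their respective generator sets, so $r\ge p_\infty,q_\infty$ and hence $r\ge p_i,q_i$ for all $i<\omega$. Thus $r$ is the required common upper bound of $\{p_i,q_i:\,i<\omega\}$, and ${\mathbb P}(\lambda)$ is countably parallel-closed. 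The only delicate step is the equation-absorption argument of the middle paragraph; everything else is bookkeeping already carried out in the proof of Lemma \ref{properties}.
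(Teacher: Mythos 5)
Your proof is correct and follows essentially the same route as the paper: form the union of each increasing chain, check that the two unions agree on their common generators, and amalgamate. You spell out the finite-stage absorption argument (any common generator, and hence any finite equation among common generators, already lies in some $w_{p_i}\cap w_{q_i}$), which the paper asserts in one line without detail.
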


\begin{proof} Suppose that $\langle p_i:\,i<\omega\rangle$ and $\langle q_i:\,i<\omega\rangle$ are pairwise compatible increasing sequences. In particular this
means that, on the one hand, each $p_i, q_i$ agree on their intersections, and on the other hand that $\bigcup_{i<\omega} p_i$
and $\bigcup_{i<\omega} q_i$ each form a Boolean algebra. Furthermore $\bigcup_{i<\omega} p_i$ and 
$\bigcup_{i<\omega} q_i$ agree on their intersection, and hence their union can be used to generate a Boolean algebra, which will
then be a common upper bound to the two sequences.
$\eop_{\ref{newmet}}$
\end{proof}

\begin{theorem}\label{isomorphisms} Let  $\lambda=\lambda^{<\lambda}$, let $\cf(\kappa)\ge\lambda^{++}$ and let
$G$ be a generic for the iteration with $(<\lambda)$- supports of length $\kappa$ of the forcing to add the generic Boolean algebra
of size $\lambda^+$ by 
conditions of size $<\lambda$ over a model of GCH. Then in $\V[G]$:
\begin{description}
\item[(1)] 
the universality number of the class of Banach spaces
of density $\lambda^+$ under {\em isomorphisms} is $\kappa=2^{\lambda^+}$.
\item[(2)] 
there are $2^{\lambda^+}$ many pairwise non-isomorphic Banach spaces of density $\lambda^+$.
\end{description}
\end{theorem}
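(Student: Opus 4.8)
The plan is to combine Theorem \ref{nonembedding} with the iteration theorem \ref{5authors} in essentially the same way as Theorem \ref{noninvUG} was derived from Theorem \ref{Cohenrealc}, but now the ``classical trick'' with reals is unnecessary because the forcing is countably closed. First I would record the bookkeeping facts about the iteration: since $\lambda=\lambda^{<\lambda}$ and GCH holds in $\V$, a standard nice-name count shows the whole iteration of length $\kappa$ (with $(<\lambda)$-supports) has size $\kappa$ and, by Theorem \ref{5authors} together with Lemma \ref{newmet}, is $(<\lambda)$-closed and stationary $\lambda^+$-cc; hence it preserves cardinals and cofinalities, adds no new bounded subsets of $\lambda$, and forces $2^{\lambda^+}=\kappa$. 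In particular every Banach space of density $\lambda^+$ in $\V[G]$ has, by the usual argument referenced in Section 2, an isomorphic copy of the form $C({\rm St}(\alg_\ast))$ for a Boolean algebra $\alg_\ast$ of size $\lambda^+$.

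For part (1), since $\kappa=2^{\lambda^+}$ is the maximal conceivable universality number, it suffices to show that no family of $<\kappa$ Banach spaces of density $\lambda^+$ embeds all such spaces; equivalently (reindexing the candidates by a single large disjoint union is not available here, so one argues directly) that for any family $\{X_\xi:\xi<\mu\}$ with $\mu<\kappa$ there is a Banach space of density $\lambda^+$ embedding into none of them. The key point is that each $X_\xi$, being of size $\lambda^+$, is already added at some stage of the iteration of cofinality $<\kappa$; since $\cf(\kappa)\ge\lambda^{++}>\mu$, there is a single stage $\delta<\kappa$ by which all $\mu$ spaces (hence the algebras $\alg_\xi$ representing them) have appeared. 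Because the forcing adds no new bounded subsets of $\lambda$, the algebra $\alg_\xi$ does not change after stage $\delta$, and crucially $C({\rm St}(\alg_\xi))$ also does not change: all of $C({\rm St}(\alg_\xi))$ is computed from $\alg_\xi$ alone since no new reals (indeed no new subsets of $\lambda$ of size $<\lambda$) enter, so the ``classical trick'' of Lemma \ref{isometries} is replaced by this triviality. Now factor the iteration as (stage-$\delta$ part) $\ast$ (tail); the tail still adds a $\mathbb P(\lambda)$-generic Boolean algebra $\alg$ at step $\delta$, and by Theorem \ref{nonembedding} applied over $\V[G_\delta]$, $C({\rm St}(\alg))$ isomorphically embeds into no $X_\ast$ in $\V[G_\delta]$, in particular into no $X_\xi$. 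Since further iteration adds no new subsets of $\lambda$ of size $<\lambda$, this non-embedding persists to $\V[G]$, and $C({\rm St}(\alg))$ has density $\lambda^+$ and is in $\V[G]$, giving the desired witness.

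For part (2), I would argue that the same $\kappa$ many generic algebras added at distinct stages yield $\kappa$ pairwise non-isomorphic $C(K)$ spaces: if $C({\rm St}(\alg_\eta))\cong C({\rm St}(\alg_{\eta'}))$ for $\eta<\eta'<\kappa$ then in particular each embeds into the other, so taking $X_\ast=C({\rm St}(\alg_\eta))\in\V[G_{\eta+1}]$ contradicts the non-embedding of $C({\rm St}(\alg_{\eta'}))$ (added generically at the later stage $\eta'$) into any ground-model-of-stage-$\eta'$ space, exactly as in part (1). Alternatively $2^{\lambda^+}=\kappa$ already forces at least $\kappa$ pairwise non-isomorphic Banach spaces of density $\lambda^+$ by a cardinality count, but the generic-algebra argument is what is needed to keep them all of the form $C(K)$ and to make part (1) work uniformly, so I would present (2) as a corollary of the machinery built for (1).

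**Main obstacle.** The delicate point is the ``persistence'' claim: that a non-embedding of $C({\rm St}(\alg))$ into $X_\ast$ established in an intermediate model is not destroyed by the rest of the iteration. This is where countable closure (more precisely, $(<\lambda)$-closure, giving no new subsets of $\lambda$ of size $<\lambda$) does the real work — it guarantees both that $\alg$, $X_\ast$ and the spaces $C({\rm St}(\cdot))$ are literally unchanged, and that a hypothetical isomorphic embedding in $\V[G]$, being determined by countably much data on each simple function with rational coefficients, would already exist in the intermediate model, contradicting Theorem \ref{nonembedding} there. Making this last reflection argument precise (which fragment of the embedding lives in which model, and why $(<\lambda)$-closure suffices rather than full $\lambda^+$-closure) is the part that requires care; everything else is bookkeeping with Theorems \ref{nonembedding}, \ref{5authors} and Lemma \ref{newmet}.
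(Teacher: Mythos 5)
Your overall strategy --- cardinal preservation via Lemma \ref{newmet} and Theorem \ref{5authors}, reduction to spaces of the form $C({\rm St}(\alg_\ast))$, locating the candidate universal spaces in an intermediate model $\V[G_\delta]$, and then running the argument of Theorem \ref{nonembedding} over that model for the algebra added at stage $\delta$ --- is exactly the paper's route, and for uncountable $\lambda$ you supply more detail than the paper does. But there is a genuine gap: the theorem includes the case $\lambda=\aleph_0$ (since $\aleph_0=\aleph_0^{<\aleph_0}$), and there your central claim that the ``classical trick with reals is unnecessary because the forcing is countably closed'' is false. For $\lambda=\aleph_0$ the conditions are finite, $(<\lambda)$-closure is vacuous, the iteration adds reals at every stage, and consequently $C({\rm St}(\alg_\xi))$ keeps changing between $\V[G_\delta]$ and $\V[G]$, so a new embedding could appear in the tail; your persistence/reflection argument does not cover this. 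The paper treats this case separately, by the rational-simple-function approximation of Lemma \ref{isometries}, carried out with names in the full iteration --- precisely the trick you set aside; you gesture at it in your ``main obstacle'' paragraph but never run it. Even for uncountable $\lambda$, the cleanest formulation is not that a hypothetical embedding ``reflects down'' to $\V[G_\delta]$ (it is coded by a function on a dense set of size $\lambda^+$ and need not lie in any bounded stage), but that the density/amalgamation proof of Theorem \ref{nonembedding} is repeated with names for the whole tail over $\V[G_\delta]$, countable closure being used only to ensure that each value $T(\chi_{[a_i]})$, a limit of an $\omega$-sequence from a ground-model dense set, lies in $\V[G_\delta]$ and can be decided by a condition.

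A second, smaller issue: the inequality ``$\cf(\kappa)\ge\lambda^{++}>\mu$'' is unjustified, since $\mu$ is merely $<\kappa$ and may exceed $\cf(\kappa)$, in which case the stages $\delta_\xi$ need not be bounded below $\kappa$. To reach the full universality number $\kappa$ one needs either the paper's dichotomy (any $\lambda^+$ many algebras of size $\lambda^+$ amalgamate into one, so the universality number is $1$ or $>\lambda^+$ --- which settles matters only when $\kappa=\lambda^{++}$) or a support argument: each $X_\xi$ has a support $a_\xi$ of size $\lambda^+$ in the iteration, $|\bigcup_{\xi<\mu} a_\xi|<\kappa$, and the algebra added at a coordinate outside this union embeds into no $X_\xi$. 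The paper is itself silent here, but your write-up asserts the false inequality explicitly, so it needs repair. Finally, your parenthetical that $2^{\lambda^+}=\kappa$ ``already forces'' $\kappa$ pairwise non-isomorphic spaces by a cardinality count is wrong (many structures can fall into few isomorphism classes); part (2) genuinely requires the generic-algebra argument, which you do give and which matches the paper's.
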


\begin{proof} (1) For the case of $\lambda=\aleph_0$ we use Theorem \ref{isometries}. In the case of uncountable $\lambda$, 
it follows from Lemma  \ref{properties}, Lemma \ref{newmet} and Theorem \ref{5authors} that the
iteration of the  forcing ${\mathbb P}(\lambda)$ described in the statement of the Theorem, preserves cardinals. By the 
countable closure of the forcing no reals are added and hence the analogue of 
Lemma \ref{isometries} holds by Lemma
\ref{nonembedding}, giving the
desired conclusion.

{\noindent (2)} The claim is that the Banach spaces $C({\rm St}(\alg))$ added at the individual steps of the iteration
are not pairwise isomorphic. To prove this, we only need to notice that an embedding of a Banach space into another is determined by the restriction of
an embedding onto a dense set, hence in our case a set of size $\lambda^+$, and then to argue as in (1). 
$\eop_{\ref{isomorphisms}}$
\end{proof}

Using the same reasoning as in Theorem \ref{isomorphisms} with $\lambda=\aleph_0$ and relating this to Theorem
\ref{Cohenrealc} we obtainÉ

\begin{theorem}\label{wcgfinal} In the Cohen model for $2^{\aleph_0}=\kappa$, where ${\rm cf}(\kappa)\ge \aleph_2$, 
the universality number of the UG Banach spaces is at least ${\rm cf}(\kappa)$ and there are ${\rm cf}(\kappa)$ pairwise
non-isomorphic UG Banach spaces.
\end{theorem}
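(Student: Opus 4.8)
The plan is to mimic the structure of the proof of Theorem \ref{isomorphisms}, specialised to $\lambda=\aleph_0$, but replacing the generic Boolean algebras coming from ${\mathbb P}(\aleph_0)$ with the generic $c$-algebras produced by the construction of Theorem \ref{Cohenrealc}, so that the witnessing spaces are UG. First I would set up the Cohen model for $2^{\aleph_0}=\kappa$ explicitly as the finite-support iteration (equivalently, the product) of $\kappa$ copies of single-step Cohen forcing ${\mathbb P}$ over a model $\V$ of GCH, exactly as in the proof of Theorem \ref{noninvUG}; since $\cf(\kappa)\ge\aleph_2$ the iteration has length of uncountable cofinality and the new reals, hence the new subsets of $\omega_1$, appear cofinally often along the iteration. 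By the standard ccc iteration theorem cardinals are preserved and $2^{\aleph_0}=\kappa$, $2^{\aleph_1}=\kappa$ in $\V[G]$.

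The key step is to produce a family $\FF=\{C({\rm St}(\alg_\xi)):\,\xi<\cf(\kappa)\}$ of UG spaces of density $\aleph_1$ which cannot be embedded into fewer than $\cf(\kappa)$ Banach spaces of density $\aleph_1$. For a strictly increasing cofinal sequence $\langle \xi_\nu:\,\nu<\cf(\kappa)\rangle$ in $\kappa$, let $\alg_{\xi_\nu}$ be the $c$-algebra added by the $\xi_\nu$-th Cohen coordinate using the construction of Theorem \ref{Cohenrealc} applied over the intermediate model $\V_{\xi_\nu}=\V[G\rest\xi_\nu]$ (note that this intermediate model is again a model of the hypotheses, and the tail forcing from $\V_{\xi_\nu}$ is again a Cohen iteration). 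Now suppose toward a contradiction that some family $\{X^j_\ast:\,j<\delta\}$ of Banach spaces of density $\aleph_1$, with $\delta<\cf(\kappa)$, isomorphically embeds every member of $\FF$. As in Theorem \ref{noninvUG} one may assume each $X^j_\ast=C({\rm St}(\alg^j_\ast))$ for a Boolean algebra $\alg^j_\ast$ of size $\aleph_1$. Since $\delta<\cf(\kappa)$ and there are only $\kappa$-many names, and since each $\alg^j_\ast$ together with the relevant embedding name is an object hereditarily of size $\aleph_1$, there is a single $\nu<\cf(\kappa)$ such that $\alg^j_\ast\in\V_{\xi_\nu}$ for all $j<\delta$ — use here that $\cf(\kappa)\ge\aleph_2$ so that a set of $<\cf(\kappa)$-many objects of size $\le\aleph_1$ is captured by a proper initial segment of the iteration. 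Then the space $C({\rm St}(\alg_{\xi_\nu}))$ is supposed to embed into some $X^j_\ast$; but $\alg_{\xi_\nu}$ is the $c$-algebra generically added over $\V_{\xi_\nu}$ and $\alg^j_\ast\in\V_{\xi_\nu}$, so Lemma \ref{isometries} (applied over $\V_{\xi_\nu}$, with its tail forcing in place of ${\mathbb Q}$) says precisely that no such embedding exists. This is the contradiction, and it simultaneously shows the universality number of UG spaces of density $\aleph_1$ is at least $\cf(\kappa)$ and — by the usual argument that any embedding is determined by its restriction to a dense subset of size $\aleph_1$ — that the $C({\rm St}(\alg_{\xi_\nu}))$ are pairwise non-isomorphic, giving $\cf(\kappa)$-many pairwise non-isomorphic UG spaces.

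The main obstacle, exactly as in Theorem \ref{noninvUG}, is that $C({\rm St}(\alg^j_\ast))$ is \emph{not} absolute between $\V_{\xi_\nu}$ and $\V[G]$: the iteration keeps adding reals, hence keeps adding continuous functions on ${\rm St}(\alg^j_\ast)$, so one cannot literally quote Theorem \ref{Cohenrealc}. The resolution is the one already given: Lemma \ref{isometries} is stated precisely to handle this, since its proof approximates $T(\chi_{[i]})$ by simple functions with rational coefficients lying in the intermediate model, and then runs the density/incompatibility argument over the names; so I would simply invoke Lemma \ref{isometries} relativised to $\V_{\xi_\nu}$ rather than reprove it. One small bookkeeping point worth checking is that the tail of the Cohen iteration above $\V_{\xi_\nu}$ factors, over $\V_{\xi_\nu}$, as a single Cohen real followed by a further Cohen iteration, so that Lemma \ref{isometries} applies verbatim with ${\mathbb Q}$ replaced by this tail; this is immediate from the product/factoring properties of Cohen forcing. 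The remaining ingredients — ccc preservation of cardinals, the reflection of $<\cf(\kappa)$-many small objects into an initial segment, and the dense-set argument for non-isomorphism — are routine.
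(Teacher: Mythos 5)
Your proposal is correct and follows exactly the route the paper intends: the paper's own ``proof'' is the single sentence preceding the theorem, which says to run the reasoning of Theorem \ref{isomorphisms} with $\lambda=\aleph_0$ (i.e.\ Lemma \ref{isometries} applied over an intermediate model capturing the candidate universal family) together with the $c$-algebra construction of Theorem \ref{Cohenrealc}, and that is precisely what you do, including the correct identification of the non-absoluteness of $C({\rm St}(\alg_\ast))$ as the issue that Lemma \ref{isometries} is designed to handle. Your writeup is in fact considerably more detailed than the paper's, and the reflection argument (a family of $<\cf(\kappa)$ objects of size $\aleph_1$ is captured by a proper initial segment since $\cf(\kappa)\ge\aleph_2$) is exactly the missing bookkeeping.
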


\bibliographystyle{plain}
\bibliography{../biblio}

\end{document}